\theoremstyle{plain}
\newtheorem{theorem}{Theorem}[section]
\newtheorem{definition}[theorem]{Definition}
\newtheorem{remark}[theorem]{Remark}
\newtheorem{example}[theorem]{Example}
\theoremstyle{remark}
\newcommand{\xyC}[1]{%
\makeatletter
\xydef@\xymatrixcolsep@{#1}
\makeatother
} 
\newcommand{\xyV}[1]{%
\makeatletter
\xydef@\xymatrixrowsep@{#1}
\makeatother
} 
\begin{document}

\title{On  structure sets  of  manifold pairs} 

\author{Matija Cencelj}      
\address{Institute of Mathematics, Physics and Mechanics, 
         and Faculty of Education,
         University of Ljubljana,
         Jadranska 19,
         SI-1000 Ljubljana
         Slovenia}
\email{matija.cencelj@guest.arnes.si}

\author{Yuri V. Muranov}
\address{Vitebsk State University,
         Moskovskii pr. 33,
         210026 Vitebsk, Belarus}
\email{ymuranov@mail.ru}

\author{Du\v{s}an Repov\v{s}}
\address{Faculty of Mathematics and Physics, 
         and Faculty of Education,
         University of Ljubljana,
         Jadranska 19,
         SI-1000 Ljubljana
         Slovenia}
\email{dusan.repovs@guest.arnes.si} 

\subjclass[2000]{57R67, 19J25, 55T99, 58A35,
18F25.}
 
\keywords{surgery on manifolds, surgery on manifold pairs, surgery obstruction
groups, splitting obstruction groups, surgery exact sequence, structure sets, 
normal invariants.}
 
\begin{abstract}
In this paper we systematically describe relations between various structure sets
which arise naturally for  pairs of compact topological manifolds with boundary.
Our consideration is based on a deep analogy between the case of a compact 
manifold with boundary and the case of a
closed manifold pair. This approach also gives
a possibility to construct the obstruction groups for natural maps of various 
structure sets and to investigate their properties.
\end{abstract}

\maketitle 

\section{Introduction}

Let
\begin{equation}\label{1.1}
Q=\left[
\xy\drop!C\xybox{\xygraph{%
!M @=1.6pc{
 X^n & \partial X^n \ar[l]   \\
 Y^{n-q} \ar[u] & \partial Y^{n-q} \ar[l] \ar[u]	
 } 
}}
\endxy
\right]
\end{equation}
be a pair of compact topological  manifolds with boundary
such that $\partial Y\subset \partial X$ and 
$Y\subset X$ are  locally flat submanifolds with given
structures of  the 
normal bundles (cf. \cite{Ranicki79} and  \cite{Ranicki81}).

The pair of manifolds in (\ref{1.1}) is a special case of a stratified
manifold (cf. \cite{Ranicki81} and \cite{Weinberger}) and a complete
description of the
relations between various structure sets in this situation is
very helpful for investigation of the general situation of a stratified
manifold. In this paper we systematically describe relations between
different structure sets which arise naturally in this case. We
shall work in the category of topological manifolds and assume
that the
dimension of all manifolds is
greater than or equal to 5.  We shall
use surgery theory for the case of simple homotopy equivalence
(\cite{Ranicki79}, \cite{Ranicki81}, and \cite{Wall}), and we 
shall
tacitly assume that all
obstruction groups are  decorated with "s".            
 We shall use the
 functoriality of the basic maps in surgery theory
and the algebraic surgery theory of Ranicki (cf. \cite{BakMur03}, \cite{CavMurSpa05}, \cite{HamRanTay}, \cite{Ranicki79}, and \cite{Ranicki81}).

In Section 2 we define classical structure sets for  a  manifold with
boundary $(X, \partial X)$ and we
describe the natural relations among them.
In Section 3 we define structure sets for  a  closed   manifold pair $(X^n, Y^{n-q})$
and we
describe the natural relations among them. We point out a deep analogy between
the case $(X, \partial X)$ considered in Section 2 and  the case of a closed manifold  pair $(X, Y)$ considered
 in Section 3.

 In Section 4  we present
 the necessary material on the
 realization of the structure
sets introduced
 above,
 on the spectrum level (\cite{BakMur03}, \cite{BakMur06}, \cite{CavMurSpa06}, \cite{CenMurRep}, 
 \cite{HamRanTay}, \cite{Ranicki79},  and \cite{Ranicki81}). From this realization and the results of
the previous sections,  we derive
 the well-known  basic diagrams of exact sequences  for the structure sets
(cf.   \cite{BakMur06}, \cite{CavMurSpa06}, \cite{CenMurRep}, \cite{Ranicki79}, \cite{Ranicki81}, 
and \cite{Wall}). In this section we also describe
 the  analogy between
 $(X, \partial X)$ and a closed manifold  pair $(X, Y)$  on the
level of  the natural maps between  the structure sets.
 This approach gives a possibility to better understand  the geometric properties of  natural maps
 in surgery theory.

 In Section 5 we extend our consideration to the case of a compact manifold
pair with boundary. The results of this section are known to the experts,
but not all of them have been published until now.
Our approach gives an opportunity to systematically
describe  the
structure sets of such pairs and natural maps between them. The naturality of
this approach also provides realization of structure sets and  the natural maps
between them  on the spectrum level. In this way it is possible
to describe all  structure sets for a closed manifold pair with
 boundary
and describe  relations among
them,  other structure sets,  and the surgery obstruction groups.

In Section 6 we point out some possible applications of our approach
to the description of  various obstruction groups  which arise naturally
when we consider the natural maps between structure sets.

\section{The  structure sets of $(X, \partial X)$}

Let $X^n$ be a closed $n$-dimensional   manifold. An
{\sl $s$-triangulation of $X$} is a
simple homotopy
equivalence
$f\colon M\to X$,
where $M^n$ is a closed $n$-dimensional topological manifold (cf. \cite{Ranicki79}, \cite{Ranicki81},
and \cite{Wall}).
Two $s$-triangulations
$f_i\colon M_i\to X$
are {\sl concordant} (cf. \cite{Ranicki81}) if there exists a
simple homotopy equivalence of triads
\[
(F;f_0, f_1): (W; M_0, M_1)\to (X\times I; X\times\{0\},
X\times\{1\}),
\]
where $W$ is a compact $(n+1)$-dimensional manifold
with the boundary $\partial W=M_0\cup M_1$.
The set of concordance classes  is
denoted by $\mathcal{S}(X)$.
\smallskip

A $t$-triangulation of $X$ is a topological normal map
$(f,b): M\to X$,
where $M^n$ is a closed $n$-dimensional topological manifold (cf.  \cite{Ranicki79} and \cite{Ranicki81}).
Two  $t$-triangulations  are  {\sl concordant} (cf. \cite{Ranicki81}) if there exists 
a topological normal
map of triads
\[
((F, C);(f_0,b_0), (f_1,b_1))\colon (W; M_0, M_1)\to (X\times I;
X\times\{0\}, X\times\{1\}).
\]
The set of concordance classes of $t$-triangulations of $X$ is
denoted by $\mathcal{T}(X)$.
\smallskip

For $n\geq 5$ these sets  fit into the surgery exact sequence (cf.
 \cite{Ranicki79}, \cite{Ranicki81}, and \cite{Wall})
\begin{equation}\label{2.1}
\cdots\to
 L_{n+1}(\pi_1(X))\to \mathcal{S}(X)\to \mathcal{T}(X)\to
L_n(\pi_1(X))\to\cdots
\end{equation}
where $L_*(\pi_1(X))$ are the surgery obstruction groups.

Thus for a closed manifold $X$ we have two types of structures,
which are  denoted by $\mathcal{S}$ and $\mathcal{T}$. We can write down the
following diagram for  these structures
\begin{equation}\label{2.2}
\mathcal{S}\longrightarrow \mathcal{T}. 
\end{equation}
We can interpret the arrow in (\ref{2.2}) as a relation of partial order and can say
that the structure $\mathcal{S}$ is `stronger' than the structure $\mathcal{T}$.

The diagram (\ref{2.2})  means that,  for any manifold $X$,  a  simple
homotopy equivalence of $n$-manifolds $ f\colon M\to X, $ which represents
an element of the structure set $\mathcal{S}(X)$, correctly defines  a
structure in $\mathcal{T}(X)$ (cf. \cite{Wall}), that is  the map $\mathcal{S}(X)\to 
\mathcal{T}(X)$ in (\ref{2.1}). The map of structures in (\ref{2.2}) is the
map of `weakening of the structure'.
\smallskip

Let $(X^n, \partial X^n)$ be a compact manifold with boundary.
In this case  there are  the following five structures on
$(X,\partial X)$:
\begin{equation}\label{2.3}
\mathcal{ T}\mathcal{ T}, \quad \mathcal{ T}\mathcal{ S}, \quad \mathcal{ T}\mathcal{ D}, 
\quad \mathcal{ S}\mathcal{ S} \ \
\hbox{and} \quad \mathcal{ S}\mathcal{ D}. \quad
\end{equation}
The two first  structures on   $(X,\partial X)$ in (\ref{2.3})  correspond to
 $\mathcal{ T}$-structures on $X$ whose restrictions to the boundary give the structures
$\mathcal{ T}$ and  $\mathcal{ S}$. The third structure corresponds to  $\mathcal{ T} $-structures
on $X$ whose restriction to $\partial X$ is a homeomorphism and the equivalence relation is considered 
rel 
the boundary. 
The two second structures   on $(X,\partial X)$ in (\ref{2.3})  are considered similarly.

For a  compact manifold with boundary  $(X^n, \partial X)$,  we recall the definition of 
the structure sets which correspond to the structures from (\ref{2.3})
(cf. \cite{CavMurSpa06},  \cite{Ranicki81}, and \cite{Wall}).
\smallskip

 ($\mathcal{ T} \mathcal{ T}$) This is the
 relative case in the sense
of   \cite{Ranicki81} and \cite{Wall}.
Let
\[
((f,b),(\partial f, \partial b))\colon
(M, \partial M)\to (X, \partial X)
\]
 be a  $t$-triangulation of the pair $(X, \partial X)$.
Two $t$-triangulations
\[
((f_i,b_i),(\partial f_i, \partial b_i))\colon
(M_i, \partial M_i)\to (X, \partial X), \ i=0,1
\]
are {\sl  concordant}
if  there exists a $t$-triangulation of the  $4$-ad
\begin{equation}\label{2.4}
((F,B);(g,c),(f_0, b_0), (f_1, b_1))\colon
(W; V, M_0, M_1)\to
(X\times I; \partial X\times I,  X\times\{0\},
 X\times\{1\})
\end{equation}
with
$\partial V=\partial M_0\cup \partial M_1$.
The set of concordance classes is denoted
by $\mathcal{ T}(X,\partial X)$.
\smallskip

($\mathcal{ T} \mathcal{ S}$) This  is the
 mixed structure on a manifold with boundary in
the sense of   \cite{CavMurSpa06} and \cite{Wall}.
Let  $f\colon (M, \partial M) \to (X, \partial X)$ be a $t$-triangulation  of $(X, \partial X)$
 such that  the restriction
$f|_{\partial M} \colon\partial M \to \partial X$
is an $s$-triangulation.
Two such maps  $f_i \colon (M_i, \partial M_i) \to (X, \partial X)$,
$i = 0, 1$,  are {\sl  concordant} if  there exists
a $t$-triangulation (\ref{2.4}) of the $4-ad$    such that
$V = F^{-1}(\partial X \times I)$ and
the restriction $F|_V$  is  an $s$-triangulation
\[
(F|_V; F|_{\partial M_0},  F|_{\partial M_1})\colon (V;\partial M_0, \partial M_1)\to
(\partial X\times I;  \partial X\times\{0\},
\partial  X\times\{1\})
\]
of the triad $(\partial X\times I;  \partial X\times\{0\},
\partial  X\times\{1\})$.
The set of equivalence classes of such maps
is denoted by $\mathcal{ T} \mathcal{ S}(X, \partial X)$  (cf. \cite{CavMurSpa06}).
\smallskip

($\mathcal{ T} \mathcal{ D}$)  This is
the $rel_{\partial}$-case in the sense of  \cite{Ranicki81} and  \cite{Wall} 
(cf. also \cite{CavMurSpa06} and \cite{CenMurRep}).
A $t_{\partial}$-triangulation of $(X, \partial X)$
is a $t$-triangulation
\[
((f,b),(\partial f, \partial b))\colon
(M, \partial M)\to (X, \partial X)
\]
whose restriction $\partial f$ to the boundary
is a homeomorphism $\partial M\to \partial X$.
Two $t_{\partial}$-triangulations
\[
((f_i,b_i),(\partial f_i, \partial b_i))\colon
(M_i, \partial M_i)\to (X, \partial X), \ i=0,1
\]
are  {\sl concordant} if there
 exists a $t$-triangulation (\ref{2.4}) of the $4$-ad
 with the condition
\begin{equation}\label{2.5}
 V=\partial M_0 \times I, \ \  (g,c)=\partial f_0 \times I\colon V\to \partial X\times I.
\end{equation}
The set of concordance classes is denoted
by $\mathcal{ T}^{\partial}(X,\partial X)$.
\smallskip

($\mathcal{ S} \mathcal{ S}$)  This is the
 relative case of $s$-triangulations in the sense
of   \cite{Ranicki81} and \cite{Wall}.
Let
\[
(f,\partial f)\colon (M, \partial M)\to (X, \partial X)
\]
be an  $s$-triangulation of the pair  $(X, \partial X)$.
Two $s$-triangulations
\[
(f_i,\partial f_i)\colon
(M_i, \partial M_i)\to (X, \partial X), \ i=0,1
\]
are {\sl concordant}
if there exists a simple homotopy equivalence of $4$-ads
\begin{equation}\label{2.6}
((F;g, f_0, f_1))\colon (W; V, M_0, M_1)\to (X\times I; \partial X\times
I,  X\times\{0\},
 X\times\{1\})
\end{equation}
with
\begin{equation}\label{2.7}
\partial V=\partial M_0\cup \partial M_1.
\end{equation}
The set of concordance classes is denoted
by $\mathcal{ S}(X,\partial X)$.
\smallskip

($\mathcal{ S} \mathcal{ D}$)  This is the case
of $s_{\partial}$-triangulation in the sense of
 \cite{CavMurSpa06},  \cite{CenMurRep},  and \cite{Ranicki81}.
An $s_{\partial}$-triangulation of $(X, \partial X)$
is an $s$-triangulation  of the pair $(X, \partial X)$
\[
(f,\partial f)\colon (M, \partial M)\to (X, \partial X)
\]
for which the restriction $\partial f$ is a homeomorphism.
Two $s_{\partial}$-triangulations
\[
(f_i,\partial f_i)\colon
(M_i, \partial M_i)\to (X, \partial X), \ i=0,1
\]
are {\sl concordant}
if  there exists an $s$-triangulation  (\ref{2.6})  of the  $4$-ad
and
\[
 V=\partial M_0 \times I, \ \  g=\partial f_0 \times I\colon V\to \partial X\times I.
\]
The set of concordance classes is denoted
by $\mathcal{ S}^{\partial}(X,\partial X)$.
\smallskip

For a compact manifold with boundary $X\hookleftarrow \partial X$
we can write down  the following diagram of structures
\begin{eqnarray}\label{2.8}
\mathcal{S}\mathcal{D}\longrightarrow&\mathcal{T}\mathcal{D}&\stackrel{=}{\longrightarrow}\mathcal{T}\mathcal{D}\nonumber\\
\downarrow\qquad&\downarrow&\qquad\downarrow\\
\mathcal{S}\mathcal{S}\longrightarrow&\mathcal{T}\mathcal{S}&\longrightarrow\mathcal{T}\mathcal{T}\nonumber
\end{eqnarray}
in which, similarly to the diagram (\ref{2.2}), the arrows provide natural maps of
`weakening of the structure' (cf. \cite{CavMurSpa06},  \cite{Ranicki81}, and \cite{Wall}),
 and we can interpret
every arrow
(or their composition) as a relation of a
partial order.

For any  structure in (\ref{2.8}),  the right symbol which corresponds to the
structure on
the  boundary is `stronger than or equal' to  the left symbol.
Every arrow    in (\ref{2.8}) gives a natural  map of `weakening of the structure' on a manifold
with boundary $(X, \partial X)$.
From this it follows that any sequence
 of arrows from
(\ref{2.8}) also gives a map of `weakening of the structure'. Hence,  for any $(X,\partial X)$,  
the diagram (\ref{2.8}) provides the following commutative diagram
of structure sets (compare with  the  diagram on page 116 of \cite{Wall})
\begin{eqnarray}\label{2.9}
\mathcal{S}^{\partial}(X, \partial X) \longrightarrow& \mathcal{ T}^{\partial}(X, \partial X)&
\stackrel{=}{\longrightarrow}\mathcal{ T}^{\partial}(X, \partial X)\nonumber\\
\downarrow\qquad&\downarrow&\qquad\downarrow\\
           \mathcal{ S}(X, \partial X)\longrightarrow&\mathcal{ T}\mathcal{ S} (X, \partial X)&\longrightarrow
 \mathcal{ T} (X, \partial X).\nonumber
\end{eqnarray}
\smallskip

The diagram (\ref{2.9}) gives an opportunity to construct several commutative
diagrams which have the form of the square or of the triangle, for example
we have
\begin{eqnarray}\label{2.10}
\mathcal{ S}^{\partial}(X, \partial X) &\longrightarrow&\mathcal{ T}^{\partial}(X, \partial X)\nonumber\\
\downarrow\qquad& &\qquad\downarrow\\
           \mathcal{ S}(X, \partial X) &\longrightarrow&
 \mathcal{ T} (X, \partial X).\nonumber
\end{eqnarray}
\smallskip

Excluding the trivial  cases when the restriction of the structure to the
boundary is a homeomorphism, we obtain the following relations
between structures on $(X, \partial X)$ and structures on $\partial X$
(cf. \cite{CavMurSpa06}, \cite{CenMurRep}, \cite{Ranicki81}, and \cite{Wall} )
\begin{equation}\label{2.11}
\mathcal{ S}\mathcal{ S}\to \mathcal{ S}, \quad \mathcal{ T}\mathcal{ S}\to \mathcal{ S},\quad
\mathcal{ T}\mathcal{ T}\to \mathcal{ T}.
\end{equation}

In (\ref{2.11}) the left pairs of symbols correspond to the structures on $(X, \partial X)$ and the right 
symbols correspond to the  structures on $\partial X$ which is the restriction of the structure on 
$(X, \partial X)$ to the boundary.
The   relations in (\ref{2.11})
and the diagram (\ref{2.8})  provide the following diagram of structures 
(cf. \cite{CavMurSpa06},  \cite{Ranicki81}, and \cite{Wall})

\begin{eqnarray}\label{2.12}
 \mathcal{ S}\mathcal{ S}\longrightarrow&\mathcal{ T}\mathcal{ S}&
 \longrightarrow \mathcal{ T}\mathcal{ T}\nonumber\\
\downarrow\qquad&\downarrow&\qquad\downarrow\\
\mathcal{ S}\longrightarrow&\mathcal{ S} &
 \longrightarrow \mathcal{ T}.\nonumber
\end{eqnarray}

As above, for any manifold
with boundary $(X, \partial X)$,  the diagram (\ref{2.12}) provides the following commutative diagram
of structure sets (compare with the  diagram on page 116 of \cite{Wall})

\begin{eqnarray}\label{2.13}
\mathcal{ S} (X, \partial X)\longrightarrow&\mathcal{ T}\mathcal{ S} (X, \partial X)
& \longrightarrow \mathcal{ T} (X, \partial X)\nonumber\\
\downarrow\qquad&\downarrow&\qquad\downarrow\\
\mathcal{ S} (\partial X)\stackrel{=}{\longrightarrow}&\mathcal{ S} (\partial X)&
\longrightarrow\mathcal{ T} (\partial X).\nonumber
\end{eqnarray}

\section{The  structure sets of a closed manifold pair $(X, Y)$}

Let $(X^n, Y^{n-q})$ be a closed manifold pair without boundary. In this section we systematically describe
various structures  on this pair. It follows from our description that, for
a pair of closed manifolds,  structures  and relations between
them     are similar to structures  and relations for a manifold with boundary.
 In fact
it is clear from our consideration, that in a number of cases the submanifold `plays the role of a boundary'
\cite{BakMur06}.
For a pair $(X^n\hookleftarrow Y^{n-q})$,  the list of structures almost  coincides with the corresponding list
of structures for $(X, \partial X)$. Below we explain the difference. First we remark that all structures from
(\ref{2.3}) are realized for manifold pairs
 $(X, Y)$.  Now we define  these structures (cf. \cite{BakMur06}, \cite{Ranicki81}, and \cite{Wall}).
\smallskip

($\mathcal T \mathcal T$) For a closed manifold pair $(X, Y)$ this
structure  is given by the structure set
 $\mathcal T(X, Y) = \mathcal T(X)$ (cf. \cite{Ranicki81})
 and it 
 consists of concordance classes of  $t$-triangulations of the manifold $X$.
\smallskip

($\mathcal T \mathcal S$) This structure is given by the  structure set $\mathcal N\mathcal S(X, Y)$ introduced in \cite{BakMur06}.
Its restriction  to $X$     gives the  $\mathcal T$ structure on
$X$ and its  restriction to $Y$  gives the $\mathcal S$-structure \cite{BakMur06}.
This structure is similar
to the  mixed structure on the manifold with boundary 
(cf.  \cite{BakMur06}, \cite{CavMurSpa06},  and \cite{Wall}).

Let  $f\colon M \to X$ be a $t$-triangulation which is transversal to the submanifold
$Y$ with  $N=f^{-1}(Y)$, and  the restriction
$f|_N\colon N\to Y$
is a simple homotopy equivalence.
Two such maps
\[
f_i \colon M_i\to X,  \ \ N_i= f_i^{-1}(Y),  \ \
(i=1,2)
\]
are {\sl  concordant}  if there
exists a $t$-triangulation
\[
(F;f_0, f_1)\colon
(W; M_0, M_1)\to
(X\times I;   X\times\{0\},
 X\times\{1\})
\]
 with  the following
properties:

\noindent (i) $\partial W =M_1\cup M_2$
and  $F|_{M_i}=f_i \ (i=0,1)$;

\noindent (ii) $F$ is transversal to $Y$  with $F^{-1}(Y)= V$ and $\partial V
=N_0\cup N_1$;

\noindent (iii) the restriction $F|_V$  is  an  $s$-triangulation of the triad
\[
\left(F|_V; f_0|_{N_0},f_1|_{N_1}\right)  \colon (V;N_0, N_1)\to (Y\times I; Y\times\{0\},  Y\times\{1\}).
\]
The set of equivalence classes of such maps
is denoted by $\mathcal N \mathcal S(X, \partial X)$ (cf.\cite{BakMur06}).
\smallskip

($\mathcal T \mathcal D$) The structure set in this case is given
by the  $rel_{\partial}$ $t$-triangulations of  the manifold with boundary $(X\setminus Y, \partial(X\setminus Y))$, that
is
$\mathcal T^{\partial}(X\setminus Y, \partial(X\setminus Y))$
(cf.  \cite{CavMurSpa06}, \cite{CenMurRep},  \cite{Ranicki81}, and \cite{Wall}).
\smallskip

($\mathcal S \mathcal S$) This structure is given by the structure set $\mathcal S(X, Y, \xi)$
where $\xi$ is a topological normal block bundle of $Y$ in $X$ 
(cf. \cite{Ranicki81}).
The restriction of this structure to $X$ is an   $\mathcal S$-structure, and the
restriction to $Y$ also is an  $\mathcal S$-structure.
We  give now an explicit  definition following \cite{Ranicki81}.

An $s$-triangulation of a manifold pair $\mathcal S(X, Y, \xi)$ is a  $t$-triangulation
\[
f\colon M\to X,  \ \ g=f|_{N}\colon N\to Y,
\]
which is transversal to $Y$ with $N=f^{-1}(Y)$ and
for which the maps
\[
g=f_N\colon N\to Y,  \ \  \text{and} \ \ f|_{M\setminus N}=h \colon
 (M\setminus N, \partial (M\setminus N))\to (X\setminus Y, \partial (X\setminus Y))
 \]
 are $s$-triangulations (cf. \cite{BakMur06}, \cite{CenMurRep}, and \cite{Ranicki81}).
Two such  $s$-triangulations
\[
(f_; g_i, h_i)\colon
(M_i; N_i, M_i\setminus N_i)\to (X; Y, X\setminus Y), \ i=0,1
\]
are {\sl concordant}
if  there exists an $s$-triangulation
\[
(F;G, H)\colon
(W; V, W\setminus V)\to (X\times I; Y\times I,  (X\setminus Y)\times I)
\]
with
\[
\partial W= M_0\cup  M_1, \ \ \partial V= N_0\cup  N_1,
\]
\[
\partial(W\setminus V)=  (M_0\setminus N_0)\cup
 (M_1\setminus N_1)\cup F^{-1}\left(\partial(X\setminus Y)\times I\right),
\]
\[
F_{M_i}=f_i, \ \
G_{M_i}= g_i, \ \
H_{M_i\setminus N_i} = h_i.
\]
The set of concordance classes is denoted
by $\mathcal S(X, Y, \xi)$.
\smallskip

($\mathcal S \mathcal D$) In this case we shall consider $\mathcal S$-structures on $X$  whose restriction
to a tubular neighborhood of $Y$ provide  $\mathcal D$-structures.  The structure set in this case is given
by the  $rel_{\partial}$  $s$-triangulations  of  the manifold with boundary $(X\setminus Y, \partial(X\setminus Y))$
that is
\[
\mathcal S^{\partial}(X\setminus Y, \partial(X\setminus Y))
\]
(cf. \cite{CavMurSpa06}, \cite{CenMurRep},  \cite{Ranicki81}, and \cite{Wall}).
\smallskip

The relations  (\ref{2.8}) take place for the structures introduced above
in this section
(cf. \cite{BakMur06}, \cite{CavMurSpa06}, \cite{CenMurRep}, \cite{Ranicki81},  and \cite{Wall}).
 Thus, for a closed  manifold pair
$(X, Y)$,  the diagram (\ref{2.8}) provides the following commutative diagram
of structure sets
\begin{eqnarray}\label{3.1}
\mathcal S^{\partial}(X\setminus Y, \partial (X\setminus Y)) \longrightarrow&
\mathcal T^{\partial}(X\setminus Y, \partial (X\setminus Y))& \stackrel{=}{\longrightarrow}
\mathcal T^{\partial}(X\setminus Y, \partial (X\setminus Y))\nonumber\\
\downarrow\qquad&\downarrow&\qquad\downarrow\\
\mathcal S(X, Y, \xi ) \longrightarrow&\mathcal N\mathcal S (X,  Y) &\longrightarrow \mathcal T (X)\nonumber
\end{eqnarray}
which is similar to diagram (\ref{2.9}).
In a similar way (cf. \cite{BakMur06}, \cite{CavMurSpa06}, \cite{CenMurRep}, and \cite{Ranicki81}), 
using the restriction of a structure
on the manifold $X$ to the submanifold $Y$,   we obtain the  relations between
 structures on $(X,Y)$ and structures on $Y$ given in  (\ref{2.11}).
Now in (\ref{2.11}) the left pairs of symbols for any arrow  correspond to structures on $(X, Y)$ and the right
symbols correspond to the  structures on $Y$. The results of
\cite{BakMur06}, \cite{Ranicki81}, and \cite{Wall}  provide the  relations (\ref{2.12}) between
structures for a closed manifold pair $(X, Y)$.
Thus, for any closed manifold
pair  $(X, Y)$,  the diagram (\ref{2.12}) provides the following commutative diagram
of structure sets that is similar to (\ref{2.13})
\begin{eqnarray}\label{3.2}
\mathcal S (X, Y, \xi) \longrightarrow& \mathcal N\mathcal S (X, Y)  &\longrightarrow  \mathcal T (X)\nonumber\\
\downarrow\qquad&\downarrow&\qquad\downarrow\\
\mathcal S (Y)\stackrel{=}{\longrightarrow}&\mathcal S (Y)&\longrightarrow\mathcal T (Y).\nonumber
\end{eqnarray}

However, there is a  difference between structures for the  pairs  $(X, \partial X)$ and $(X, Y)$.
For a manifold pair $(X,Y)$ we can consider also the $\mathcal S$-structure on $X$ as $\mathcal S\mathcal T$-structure on
the pair $(X,Y)$. There is no analogue structure for the case of  manifolds with boundary.
In this case the restriction of the structure to the submanifold gives a `weaker
structure' than the structure on the ambient manifold.  
This structure fits into the following commutative diagram of
structures (cf. \cite{BakMur06} and \cite{Ranicki81})
\begin{eqnarray}\label{3.3}
\mathcal S\mathcal D&\stackrel{=}{\longrightarrow}& \mathcal S\mathcal D \nonumber\\
\downarrow&\qquad&\downarrow\nonumber\\
\mathcal S\mathcal S&\longrightarrow& \mathcal S\mathcal T \\
\downarrow&\qquad&\downarrow\nonumber\\
\mathcal T\mathcal S&\longrightarrow& \mathcal T\mathcal T \nonumber
\end{eqnarray}
which has no analog for the structures on manifolds with boundary.
For any closed manifold pair $(X,Y)$ the diagram (\ref{3.3}) provides the following
commutative diagram of structure sets
\begin{eqnarray}\label{3.4}
\mathcal S^{\partial} (X\setminus Y, \partial(X\setminus Y))&\stackrel{=}{\longrightarrow}&
 \mathcal S^{\partial} (X\setminus Y, \partial(X\setminus Y))\nonumber\\
\downarrow&\qquad&\downarrow\nonumber\\
\mathcal S(X, Y, \xi)&\longrightarrow&\mathcal S(X) \\
\downarrow&\qquad&\downarrow\nonumber\\
\mathcal S\mathcal N(X, Y)&\longrightarrow&\mathcal T(X). \nonumber
\end{eqnarray}

The structure  $\mathcal S\mathcal T$ fits also into the following diagram of structures
\begin{eqnarray}\label{3.5}
\mathcal S\mathcal S\longrightarrow&\mathcal S\mathcal T&\longrightarrow\mathcal T\mathcal T\nonumber \\
\downarrow\qquad&\downarrow&\qquad\downarrow\\
\mathcal S\longrightarrow& \mathcal T&\stackrel{=}{\longrightarrow} \mathcal T \nonumber
\end{eqnarray}
in which the vertical arrows correspond to the restriction of the corresponding structure
to the submanifold. For any closed manifold pair $(X,Y)$, the diagram (\ref{3.5}) gives the following
commutative diagram of structure sets
\begin{eqnarray}\label{3.6}
\mathcal S(X,Y,\xi)\longrightarrow&\mathcal S(X)&\longrightarrow\mathcal T(X)\nonumber \\
\downarrow\qquad&\downarrow&\qquad\downarrow\\
\mathcal S(Y)\longrightarrow&\mathcal T(Y)& \stackrel{=}{\longrightarrow}\mathcal T(Y). \nonumber
\end{eqnarray}

\section{The spectrum level. The diagrams  of  structure sets}

In accordance with the algebraic surgery theory of Ranicki (cf. \cite{BakMur03}, \cite{CavMurSpa06}, \cite{HamRanTay}, and \cite{Ranicki79})
the  structure sets introduced above,  various surgery obstruction groups, and natural maps
are realized on the spectrum level. We shall use the following notations.

Let $\mathbf L_{\bullet}$ be   the 1-connected cover of the
simply connected surgery $\Omega$--spectrum
$\mathbf L_{\bullet}(1)$ with
$\mathbf {L_{\bullet}}_0 \simeq G/TOP$ and $\pi_n(\mathbf L_{\bullet}(1))=L_n(1)  \ (n>0)$ (cf. \cite{Ranicki79} and  \cite{Ranicki81}).
The  cofibration
\begin{equation}\label{4.1}
X_+\land \mathbf L_{\bullet}\rightarrow \mathbb  L(\pi_1(X))\to \mathbb S(X).
\end{equation}
is defined for any   topological space  $X$ (cf. \cite{Ranicki79} and \cite{Ranicki81}),
and the algebraic surgery  exact sequence
\begin{equation}\label{4.2}
\cdots\rightarrow L_{n+1}(\pi_1(X))\rightarrow \mathcal  S_{n+1}(X)
\rightarrow
H_n(X; \mathbf L_{\bullet})\rightarrow L_{n}(\pi_1(X))\rightarrow
\cdots
\end{equation}
is a homotopy long exact sequence of this cofibration
with
\begin{equation}\label{4.3}
\pi_{j}(\mathbb S(X)) =\mathcal S_{j}(X), \quad  \mathcal T_j(X)= H_j(X; \mathbf L_{\bullet}).
\end{equation}
The groups $L_*(\pi_1(X))$ are the surgery obstruction groups of
Wall  \cite{Wall}. For a closed manifold $X^n$ with $n\geq 5$, we have
\begin{equation}\label{4.4}
\mathcal S(X) \cong \mathcal S_{n+1}(X), \quad \mathcal T(X)\cong  \mathcal T_n(X)= H_n(X; \mathbf L_{\bullet}).
\end{equation}
 For the structure sets of a compact manifold   $(X^n, \partial X^n)$
with boundary  $\partial X$ there
 exists the following homotopy commutative diagram of spectra
 (cf. \cite{CavMurSpa06}, \cite{CenMurRep}, \cite{Ranicki79}, and   \cite{Ranicki81})
\begin{equation}\label{4.5}
\xymatrix{
 & \vdots \ar[d] & \vdots \ar[d] & \vdots \ar[d]   & &\\
\dots \ar[r] & \Omega\mathbb S^{\partial}(X, \partial X) \ar[r] \ar[d] & X_+\land \mathbf L_{\bullet} \ar[r] \ar[d] & \mathbb L(\pi_1( X)) \ar[r] \ar[d] & \dots \\
\dots \ar[r] & \Omega \mathbb S(X, \partial X) \ar[r] \ar[d] & (X/\partial X)_+\land \mathbf L_{\bullet} \ar[r] \ar[d] & \mathbb L^{rel} \ar[r] \ar[d]& \dots\\
\dots \ar[r] & \mathbb S(\partial X) \ar[r] \ar[d] & \Sigma (\partial X)_+\land \mathbf L_{\bullet} \ar[d] \ar[r] &\Sigma \mathbb L(\pi_1(\partial X)) \ar[d] \ar[r] &\dots \\
  & \vdots  & \vdots  &\vdots  & }
 \end{equation}
where $\mathbb L^{rel}=\mathbb L (\pi_1(\partial X)\to \pi_1(X))$ is a spectrum for the relative $L$-groups.
The homotopy long exact sequences of the maps from (25) give
the commutative diagram of exact sequences
(cf.  \cite{Ranicki81} and \cite{Wall})
\begin{equation}\label{4.6}
\xymatrix{
\ar[d]&     &\ar[d] &    & \ar[d]        \\
\to \mathcal S^{\partial}_{n+1}(X,\partial X)
&\to&
H_n(X; \mathbf L_{\bullet})& \to & L_n(\pi_1(X))\to \\
\ar[d]& \ast   &\ar[d] &    & \ar[d]    \\
\to
\mathcal S_{n+1}(X,\partial X) &\to &H_n(X,\partial X;\mathbf L_{\bullet})&
\to &L_n^{rel}\to \\
\ar[d]& \ast\ast   &\ar[d] &    & \ar[d]   \\
\to \mathcal S_{n}(\partial X)\ar[d]
&\to&
H_{n-1}(\partial X; \mathbf L_{\bullet})\ar[d]& \to &
L_{n-1}(\pi_1(\partial X))\ar[d]\to\\
&&&&}
\end{equation}
and  we have the following isomorphisms
of the structure sets
\begin{equation}\label{4.7}
\xymatrix{
\mathcal T^{\partial}(X, \partial X) \cong H_n(X; \mathbf L_{\bullet}),& \quad&
\mathcal T(X, \partial X)\cong H_*(X, \partial X; \mathbf L_{\bullet}),\\
\mathcal T (\partial X)\cong H_{n-1}(\partial X; \mathbf L_{\bullet}), & \quad &
\mathcal S^{\partial}(X, \partial X) \cong  \mathcal S^{\partial}_{n+1}(X, \partial X), \\
\mathcal S(X, \partial X)\cong \mathcal S_{n+1}(X, \partial X), &\quad &
\mathcal S (\partial X)\cong  \mathcal S_n(\partial X)).}
\end{equation}
Note that the commutative square $*$ in (\ref{4.6}) is isomorphic
to the commutative square
\begin{eqnarray}\label{4.8}
          \mathcal S^{\partial} (X, \partial X)&\longrightarrow&\mathcal T^{\partial}
 (X, \partial X)\nonumber\\
\downarrow&&\downarrow  \\
\mathcal S (X,  \partial X)&\longrightarrow&\mathcal T (X, \partial X).\nonumber
\end{eqnarray}
which follows from the diagram  (\ref{2.9}),
and the commutative square $**$  is isomorphic  to the commutative square
\begin{eqnarray}\label{4.9}
          \mathcal S (X, \partial X) &\longrightarrow&\mathcal T (X, \partial X)\nonumber\\
\downarrow&&\downarrow   \\
\mathcal S (\partial X)&\longrightarrow&\mathcal T (\partial X).\nonumber
\end{eqnarray}
which follows from
(\ref{2.13}).
\smallskip

For a compact  manifold with boundary
$(X^n, \partial X^n)$, we define a spectrum $\mathbb T\mathbb S(X, \partial X)$ as the homotopy cofiber
of  the  map
$\mathbb L(\pi_1(X))\longrightarrow \mathbb S(X, \partial X)$
which is a composition of maps from the diagram (\ref{4.5}) \cite{CavMurSpa06}.
Thus, as follows from the diagram (\ref{4.5}),  the spectrum $\Omega \mathbb T\mathbb S(X, \partial X)$
 fits into the following cofibrations
(cf. \cite{CavMurSpa06})
\begin{eqnarray}\label{4.10}
\Omega \mathbb T\mathbb S(X, \partial X) \longrightarrow& \mathbb L(\pi_1(X))&\longrightarrow \mathbb S(X, \partial X),\nonumber\\
\Omega \mathbb T\mathbb S(X, \partial X) \longrightarrow& (X/\partial X)_+\land\mathbf L_{\bullet}&\longrightarrow \Sigma \mathbb L(\pi_1(\partial X)),\\
\Omega \mathbb T\mathbb S(X, \partial X) \longrightarrow& \mathbb S(\partial X) &\longrightarrow \Sigma \left(X_+\land\mathbf L_{\bullet}\right).\nonumber
\end{eqnarray}

 Denote
$ \pi_i(\mathbb T\mathbb S(X, \partial X))=\mathcal T\mathcal S_i(X, \partial X)$,
and we have an isomorphism $ \mathcal T\mathcal S(X, \partial X)\cong \mathcal
T\mathcal S_{n+1}(X, \partial X)$.

The  mixed structure sets relate to other structure sets introduced above
by the following braids of exact sequences
\cite{CavMurSpa06}
\begin{equation}\label{4.11}
\xymatrix{\xyC{0.8pc}
 \ar[r] & \mathcal S_{n+1}(\partial X) \ar[rr] \ar[rd] &   & H_{n}(X;\mathbf L_{\bullet}) \ar[rr] \ar[rd]  &   & L_{n+1}( \pi_1(X) ) \ar[r] \ar[dr] &\\
\ar[ur] \ar[dr]   &   & \mathcal S_{n+1}^{\partial}(X, \partial X) \ar[dr] \ar[ur] & \ast & \mathcal T\mathcal S_{n+1}(X, \partial X) \ar[ur] \ar[dr] & &\\
 \ar[r]  & L_{n+1}( \pi_1(X) ) \ar[rr] \ar[ur]  &  & \mathcal S_{n+1}(X, \partial X) \ar[rr] \ar[ur] &  & \mathcal S_{n+1}(\partial X) \ar[r] \ar[ur] & }
\end{equation}
\begin{equation}\label{4.12}
\xymatrix{\xyC{0.8pc}
\ar[r] & H_{n+1}(X, \partial X;\mathbf L_{\bullet}) \ar[dr] \ar[rr]  &   & L_{n}(\pi_1(\partial X)) \ar[rr] \ar[dr] &  & \mathcal S_{n}(\partial X) \ar[r] \ar[dr] &\\
\ar[ur] \ar[dr] &   & H_n(\partial X; \mathbf L_{\bullet}) \ar[ur] \ar[dr] & & \mathcal T\mathcal S_{n+1}(X, \partial X) \ar[ur] \ar[dr] & \ast \ast & \\
\ar[r] & \mathcal S_{n+1}(\partial X) \ar[rr] \ar[ur] &  &  H_{n}(X; \mathbf L_{\bullet}) \ar[rr] \ar[ur] & \ar@{}[u]|-{\star}  & H_{n}(X, \partial X;\mathbf L_{\bullet}) \ar[ur] \ar[r] & }
\end{equation}
and
\begin{equation}\label{4.13}
\xymatrix{\xyC{0.8pc}
\ar[r] & L_{n+1}(\pi_1(X)) \ar[dr] \ar[rr]  &   & \mathcal S_{n+1}(X,\partial X)  \ar[rr] \ar[dr] & \ar@{}[d]|-{\ast} & H_{n}(X, \partial X;\mathbf L_{\bullet}) \ar[r] \ar[dr] &\\
\ar[ur] \ar[dr] &   & L_{n+1}^{rel} \ar[ur] \ar[dr] &  & \mathcal T\mathcal S_{n+1}(X, \partial X) \ar[ur] \ar[dr] &  & \\
\ar[r] & H_{n+1}(X, \partial X;\mathbf L_{\bullet} ) \ar[rr] \ar[ur] &  &  L_{n}(\pi_1(\partial X)) \ar[rr] \ar[ur] &   & {L}_{n}(\pi_1(X)) \ar[ur] \ar[r] & }
\end{equation}
which are realized on the spectrum level.

Note that the commutative square $*$  in (\ref{4.11}) is isomorphic to the left commutative square in 
the diagram (\ref{2.9}), the commutative square $**$  in (\ref{4.12}) is isomorphic to the right commutative
square in the diagram (\ref{2.13}), the commutative triangle $\star $ in (\ref{4.12}) follows from the right
square of (\ref{2.9}), and the commutative triangle $*$ in the diagram (\ref{4.13})  
is isomorphic to the commutative triangle
\begin{equation}\label{4.14}
\xymatrix{\xyC{0.8pc}
\mathcal S(X,\partial X) \ar[rr] \ar[dr] &  & \mathcal T(X,\partial X) \ar[dl]\\
& \mathcal T \mathcal S(X,\partial X) & \\
 }
\end{equation}
which follows from the diagram (\ref{2.13}).
\smallskip

Now let  $(X^n, Y^{n-q}, \xi)$  be a closed  manifold pair (cf. \cite{Ranicki81}*{\S 7.2}).
An $s$-triangulation  $f: M\to X$ {\sl splits} along  $Y$
if it is
homotopic to  a map  which is an $s$-triangulation of the manifold pair
$(X, Y, \xi)$ (the  definition was given  in Section 3).   The splitting obstruction group $LS_{n-q}(F)$
is  defined in  \cite{Ranicki81}*{\S 7.2}. Let  $\partial U$ be the boundary of a tubular
neighborhood $U$ of $Y$ in $X$.
The groups  $LS_{n-q}(F)$  depend  only on $n-q \bmod 4 $ and a pushout square
\begin{equation}\label{4.15}
F=\left(
\begin{array}{ccc}
 \pi_1(\partial U)&\to &\pi_1(X\setminus Y) \\
 \downarrow&&\downarrow\\
 \pi_1(Y)& \to&\pi_1(X) 
\end{array}
\right)
\end{equation}
of fundamental groups with  orientations.

If $f\colon M\to X$ is a $t$-triangulation then by \cite{Ranicki81}*{\S 7.2}
a  group $LP_{n-q}(F)$ of obstructions
is defined. This group  provides obstructions to the existence of an $s$-triangulation
of $(X, Y, \xi)$ in the class of the normal bordism of the map $f$.
The groups $LP_{n-q}(F)$
depend only on $n-q\bmod 4$  and the square $F$ as well.
There exists a homotopy commutative diagram
of spectra (cf. \cite{BakMur06} and \cite{Ranicki81})
\begin{eqnarray}\label{4.16}
\Omega\mathbb S^{\partial}(X\setminus Y, \partial(X\setminus Y))
\longrightarrow& (X\setminus Y)_+\land \mathbf L_{\bullet}&
\longrightarrow\mathbb L(\pi_1(X\setminus Y))\nonumber\\
\downarrow\qquad\quad&\downarrow&\qquad\quad\downarrow \nonumber\\
\Omega\mathbb S(X, Y, \xi)\longrightarrow&
  X_+\land \mathbf L_{\bullet}&
\longrightarrow
    \Sigma^q\mathbb L\mathbb P(F)  \\
\downarrow\qquad\quad&\downarrow&\qquad\quad\downarrow \nonumber\\
\Sigma^{q-1} \mathbb S(Y) \longrightarrow&\Sigma^{q} \left(Y_+\land \mathbf L_{\bullet}\right)&
 \longrightarrow\Sigma^{q} \mathbb L(\pi_1(Y)) \nonumber
\end{eqnarray}
where
the spectrum $\mathbb L\mathbb P(F)$ is the spectrum for obstruction groups $LP_i(F)$,
and all rows and columns in (\ref{4.16}) are cofibrations. Denote
$\pi_i(\mathbb S(X, Y, \xi))= \mathcal S_i(X,Y,\xi)$,  and we have an isomorphism \cite{Ranicki81}
\begin{equation}\label{4.17}
\mathcal S_{n+1}(X,Y,\xi)\cong \mathcal S(X,Y,\xi).
\end{equation}
The homotopy long exact sequences of rows and columns of the diagram (\ref{4.16}) give
the homotopy commutative diagram  of  groups
\begin{equation}\label{4.18}
\xymatrix{
 & \vdots \ar[d] & \vdots \ar[d] & \vdots \ar[d]   & &\\
\dots \ar[r] & \mathcal S_{n+1}^{\partial}(X\setminus Y, \partial(X\setminus
Y)) \ar[r] \ar@{}[dr]|-{\ast} \ar[d] & H_n(X\setminus Y;\mathbf L_{\bullet}) \ar[r] \ar[d] & L_n(\pi_1(X\setminus Y)) \ar[r] \ar[d] & \dots \\
\dots \ar[r] & \mathcal  S_{n+1}(X,Y, \xi) \ar@{}[dr]|-{\ast\ast} \ar[r] \ar[d] & H_n(X;\mathbf L_{\bullet})  \ar[r] \ar[d] & LP_{n-q}(F) \ar[r] \ar[d]& \dots \\
\dots \ar[r] & \mathcal  S_{n-q+1}(Y) \ar[r] \ar[d] & H_{n-q}(Y;\mathbf L_{\bullet}) \ar[d] \ar[r] & L_{n-q}(\pi_1(Y)) \ar[d] \ar[r] &\dots \\
  & \vdots  & \vdots  &\vdots  & }
\end{equation}
whose rows and columns are exact sequences.
This is the diagram  of \cite{Ranicki81}*{Proposition 7.2.6}.
Note that the commutative square $*$ in (\ref{4.18}) is isomorphic
to the commutative square
\begin{eqnarray}\label{4.19}
\mathcal S^{\partial} (X\setminus Y, \partial (X\setminus Y))&\longrightarrow&
\mathcal T^{\partial} (X\setminus Y, \partial (X\setminus Y))\nonumber\\
\downarrow\quad&&\quad\downarrow\\
\mathcal S (X,  Y, \xi)&\longrightarrow&\mathcal T (X)\nonumber
\end{eqnarray}
which follows from the
diagram  (\ref{3.1}). The commutative square $**$  in (\ref{4.18})
is isomorphic
to the commutative square
\begin{eqnarray}\label{4.20}
          \mathcal S(X, Y, \xi)&\longrightarrow&\mathcal T (X)\nonumber\\
\downarrow\quad&&\quad\downarrow\\
\mathcal S (Y)&\longrightarrow&\mathcal T (Y)\nonumber
\end{eqnarray}
which follows from the
diagram  (\ref{3.5}).
Thus the diagram (\ref{4.18}) is similar to the diagram (\ref{4.6}) which was constructed for a manifold
with boundary.

For a closed  manifold pair
$(X^n, Y^{n-q})$, we define a spectrum $\mathbb N\mathbb S(X, Y)$ as the homotopy fiber
of  the  map
$
\mathbb L(\pi_1(X\setminus Y))\longrightarrow \mathbb S(X, Y, \xi)
$
which is the composition of maps from the extended diagram (\ref{4.16}) \cite{BakMur06}.
It follows from (\ref{4.16}) (cf. \cite{BakMur06}) that this spectrum fits into the
following cofibrations
\begin{eqnarray}\label{4.21}
\mathbb N\mathbb S(X, Y)\longrightarrow&\mathbb L(\pi_1(X\setminus Y))&\longrightarrow\mathbb S(X, Y, \xi),\nonumber\\
\mathbb N\mathbb S(X, Y)\longrightarrow& X_+\land \mathbf L_{\bullet}&\longrightarrow\Sigma^q \mathbb L(\pi_1(Y)),\\
\mathbb N\mathbb S(X, Y)\longrightarrow& \Sigma^{q-1}\mathbb S(Y)&\longrightarrow
\Sigma\left((X\setminus Y)_+\land \mathbf L_{\bullet}\right).\nonumber
\end{eqnarray}
Note that the cofibrations in (\ref{4.21}) for $(X, Y)$ are similar to cofibrations in (\ref{4.10})
for $(X, \partial X)$.
Denote
$
\pi_i(\mathbb N\mathbb S(X, Y))=\mathcal N\mathcal S_i(X, Y),
$
and we have an isomorphism
$
\mathcal N\mathcal S(X, Y)\cong \mathcal N\mathcal S_{n}(X, Y)
$.

The  structure sets $\mathcal N\mathcal S(X,Y) $   relate to the other structure sets for a manifold pair
$(X,Y)$ by the following braids of exact sequences
\cite{BakMur06}
\begin{equation}\label{4.22}
\xymatrix{\xyC{0.1pc}
\ar[r] & \mathcal S_{n-q+2}( Y ) \ar[dr] \ar[rr]  &   & H_{n}(X\setminus Y; \mathbf L_{\bullet}) \ar[rr] \ar[dr] &  & L_{n}(\pi_1(X\setminus Y)) \ar[r] \ar[dr] &\\
\ar[ur] \ar[dr] &   & \mathcal  S_{n+1}^{\partial}(X\setminus Y, \partial(X\setminus Y)) \ar[ur] \ar[dr] & \ast & \mathcal N\mathcal S_{n}(X,Y) \ar[ur] \ar[dr] &  & \\
\ar[r] & L_{n+1}(\pi_1(X\setminus Y)) \ar[rr] \ar[ur] &  & \mathcal  S_{n+1}(X,Y,\xi)  \ar[rr] \ar[ur] &   & \mathcal  S_{n-q+1}(Y) \ar[ur] \ar[r] & }
\end{equation}
\begin{equation}\label{4.23}
\xymatrix{\xyC{0.7pc}
\ar[r] & H_{n+1}(X;\mathbf L_{\bullet}) \ar[dr] \ar[rr]  &   & L_{n-q+1}(\pi_1(Y)) \ar[rr] \ar[dr] &  & \mathcal  S_{n-q+1}(Y) \ar[r] \ar[dr] &\\
\ar[ur] \ar[dr] &   & H_{n-q+1}(Y; \mathbf L_{\bullet}) \ar[ur] \ar[dr] &  & \mathcal  N\mathcal  S_{n}(X,Y) \ar[ur] \ar[dr] & \ast \ast & \\
\ar[r] & \mathcal S_{n-q+2}(Y) \ar[rr] \ar[ur] &  & H_{n}(X\setminus Y;\mathbf L_{\bullet})  \ar[rr] \ar[ur] &   & H_{n}(X; \mathbf L) \ar[ur] \ar[r] & }
\end{equation}
\begin{equation}\label{4.24}
\xymatrix{\xyC{0.5pc}
\ar[r] & {L}_{n+1}(\pi_1(X\setminus Y)) \ar[dr] \ar[rr]  &   & \mathcal  S_{n+1}(X,Y, \xi ) \ar[rr] \ar[dr] &  & H_{n}(X; \mathbf L_{\bullet}) \ar[r] \ar[dr] &\\
\ar[ur] \ar[dr] &   & {LP}_{n-q+1}(F) \ar[ur] \ar[dr] &  & \mathcal  N \mathcal  S_{n}(X,Y) \ar[ur] \ar[dr] &  & \\
\ar[r] & H_{n+1}(X; \mathbf L_{\bullet}) \ar[rr] \ar[ur] &  & {L}_{n+1-q}(\pi_1(Y))  \ar[rr] \ar[ur] &   & {L}_{n}(\pi_1(X\setminus Y)) \ar[ur] \ar[r] & }
\end{equation}
The diagrams (\ref{4.22}) -- (\ref{4.24}) are realized on the spectrum level
and   follow
 from the diagrams (\ref{4.16}) (cf. \cite{BakMur03}, \cite{BakMur06},   and \cite{Muranov}).
The diagram (\ref{4.22}) is similar to the diagram (\ref{4.11}), the diagram (\ref{4.23})
is similar to the diagram (\ref{4.12}), and the diagram (\ref{4.24}) is similar to the
diagram (\ref{4.13}).
\smallskip

For the case of a manifold pair $(X,Y, \xi)$ we can consider the following
pair of topological spaces  $(X, X\setminus Y)$, where as usual
$X\setminus Y$ denotes the closure of $X\setminus U$ where
$U$ is a tubular neighborhood of $Y$ in $X$.
 Then the  commutative diagram
of exact sequences from  page 560 of \cite{Ranicki81} gives   the  diagram
\begin{equation}\label{4.25}
\xymatrix{
 & \vdots \ar[d] & \vdots \ar[d] & \vdots \ar[d]   & &\\
\dots \ar[r] & H_n(X\setminus Y;\mathbf L_{\bullet}) \ar[r] \ar[d] & L_{n}(\pi_1(X\setminus Y)) \ar[r] \ar[d] & \mathcal  S_{n}^{\partial}(X\setminus Y, \partial (X\setminus Y)) \ar[r] \ar[d] & \dots \\
\dots \ar[r] & H_n(X;\mathbf L_{\bullet}) \ar[r] \ar[d] & L_{n}(\pi_1(X)) \ar[r] \ar[d] & \mathcal  S_{n}(X) \ar[r] \ar[d]& \dots \\
\dots \ar[r] & H_{n}(X, X\setminus Y;\mathbf L_{\bullet}) \ar[r] \ar[d] & L_{n}^{rel} \ar[d] \ar[r] & \mathcal  S_{n}(X, X\setminus Y) \ar[d] \ar[r] &\dots \\
  & \vdots  & \vdots  &\vdots  & }
\end{equation}
where
$$
L_{n}^{rel}= L_n(\pi_1(X\setminus Y)\to \pi_1(X)), \quad H_{n}(X, X\setminus Y; \mathbf L_{\bullet}) \cong
H_{n-q}(Y; \mathbf L_{\bullet}).
$$

The structure sets for a manifold pair $(X, Y)$ also fit into  the following commutative diagrams
of exact sequences \cite{Ranicki81}*{\S  7.2}:
\begin{equation}\label{4.26}
\xymatrix{\xyC{0.8pc}
\ar[r] & L_{n+1}(\pi_1(X)) \ar[dr] \ar[rr]  &   & LS_{n-q}(F) \ar[rr] \ar[dr] &  & \mathcal S_{n}(X,Y, \xi) \ar[r] \ar[dr] &\\
\ar[ur] \ar[dr] &   & \mathcal S_{n+1}(X) \ar[ur] \ar[dr] &  & LP_{n-q}(F) \ar[ur] \ar[dr] &  & \\
\ar[r] & \mathcal S_{n+1}(X,Y, \xi ) \ar[rr] \ar[ur] & \ar@{}[u]|-{\ast} & H_{n}(X; \mathbf L_{\bullet})  \ar[rr] \ar[ur] &   & L_n(\pi_1(X)) \ar[ur] \ar[r], & }
\end{equation}
\begin{equation}\label{4.27}
\xymatrix{\xyC{0.2pc}
\ar[r] & \mathcal S_n^{\partial}(X\setminus Y, \partial(X\setminus Y) ) \ar[dr] \ar[rr]  & \ar@{}[d]|-{\ast}  & \mathcal S_{n}(X) \ar[rr] \ar[dr] &  & LS_{n-q-1}(F) \ar[r] \ar[dr] &\\
\ar[ur] \ar[dr] &   & \mathcal S_n(X, Y, \xi) \ar[ur] \ar[dr] &  & \mathcal S_n(X, X\setminus Y) \ar[ur] \ar[dr] &  & \\
\ar[r] & LS_{n-q}(F) \ar[rr] \ar[ur] &  & \mathcal S_{n-q}(Y)  \ar[rr] \ar[ur] &   & \mathcal S_{n-1}^{\partial}(X\setminus Y, \partial(X\setminus Y) ) \ar[ur] \ar[r] & }
\end{equation}
and
\begin{equation}\label{4.28}
\xymatrix{\xyC{0.8pc}
\ar[r] & H_{n-q}(Y; \mathbf L_{\bullet}) \ar[dr] \ar[rr]  &   & L_{n}^{rel} \ar[rr] \ar[dr] &  & LS_{n-q-1}(F) \ar[r] \ar[dr] &\\
\ar[ur] \ar[dr] &   & L_{n-q}(\pi_1(Y)) \ar[ur] \ar[dr] &  & \mathcal S_n(X, X\setminus Y) \ar[ur] \ar[dr] &  & \\
\ar[r] & LS_{n-q}(F) \ar[rr] \ar[ur] &  & \mathcal S_{n-q}(Y)  \ar[rr] \ar[ur] &   & H_{n-q}(Y; \mathbf L_{\bullet}) \ar[ur] \ar[r] & }
\end{equation}
which are realized on the spectrum level. The commutative triangle $*$ in (\ref{4.26})
corresponds to the following diagram of structures
\begin{equation}\label{4.29}
\xymatrix{\xyC{0.8pc}
\mathcal {SS} \ar[rr] \ar[dr] &  & \mathcal {TT} \\  
& \mathcal {ST} \ar[ur] & \\                          
 }
\end{equation}
which follows from the diagram (\ref{3.3}).

The commutative triangle $*$ in (\ref{4.27})
corresponds to the following diagram of structures
\begin{equation}\label{4.30}
\xymatrix{\xyC{0.8pc}
\mathcal {SD} \ar[rr] \ar[dr] &  & \mathcal {ST}\\
& \mathcal {SS}  \ar[ur] & \\
 }
\end{equation}
which follows from the diagram (\ref{3.3}).

For the case $(X, \partial X)$,  there are no  diagrams that are similar to the diagrams
(\ref{4.25})--(\ref{4.28}).  In this case we have a simple homotopy equivalence
$(X\setminus \partial X)\to X $,  and the theory is degenerated.

\section{The  structure sets of a manifold pair with boundary}

In this section we describe various  structure sets which
arise naturally for a compact  manifold pair with boundary in (\ref{1.1}).
 We shall use notations that are similar to notations of previous sections.
Thus, for example,
\begin{equation}\label{5.1}
\left[\begin{array}{cc}
\mathcal T&\mathcal T\\
\mathcal S&\mathcal D
\end{array}
\right]
\end{equation}
denotes a structure on $Q$ in (\ref{1.1}) which is a $\mathcal T\mathcal T$-structure on $(X,\partial X)$
 whose restriction to $(Y\hookleftarrow \partial Y)$ (the bottom pair of symbols) gives
an $\mathcal S \mathcal D$-structure,  and  whose restriction to $(\partial X\hookleftarrow \partial Y)$
(right vertical  pair of symbols) gives a $\mathcal T \mathcal D$-structure.
\smallskip

We  shall denote by $F$ the pushout square for the manifold pair
$(X, Y)$ as in (\ref{4.15}). The  manifold pair  with boundary $Q$
 in (\ref{1.1}) defines a pair of closed manifolds $\partial Y\subset \partial X$
with  a pushout square
\begin{equation*}
F_{\partial}=\left(
\begin{array}{ccc}
{\pi}_{1}(S(\partial\xi)) &
\longrightarrow & {\pi}_{1}(\partial X\setminus \partial Y) \\
\downarrow & \  & \downarrow \\
{\pi}_{1}(\partial Y) &
\longrightarrow & {\pi}_{1}(\partial X)
\end{array} \right)
\end{equation*}
of fundamental groups for the splitting problem. A natural inclusion
$\delta\colon \partial X \to X$ induces a map $\Delta\colon F_{\partial}\to
F$ of squares of fundamental groups. The relative splitting
obstruction groups $LS_*(\Delta)$ and the relative surgery obstruction
groups $LP_*(\Delta)$ are defined in \cite{CavMurSpa05}, \cite{CenMurRep}, \cite{Ranicki81},  and
\cite{Wall}.

First we describe structures for which there is a similarity
between the pairs  $(X, \partial X)$ and $(X, Y)$. In this case  for a  structure
$\left[\begin{array}{cc} \mathcal A \mathcal B\\
\mathcal C\mathcal E\\
\end{array}\right] $
we   have  that  $\mathcal B$ and $\mathcal C$ are  stronger than or equal to $\mathcal A$, and $\mathcal E$ is
stronger than or equal to $\mathcal B$ and $\mathcal C$.
In such a way it is sufficiently easy to describe all possible natural  structures on a manifold pair
with boundary in  (\ref{1.1}) (cf. \cite{CenMurRep} and \cite{Ranicki81}*{pp. 585-587}). 
In this case we obtain the following list of possible structures
\begin{equation}\label{5.2}
\left[\begin{array}{cc} \mathcal S \mathcal D\\
\mathcal D\mathcal D
\end{array}\right], \ \ \left[\begin{array}{cc} \mathcal S \mathcal S\\
\mathcal D\mathcal D
\end{array}\right], \
\ \left[\begin{array}{cc} \mathcal S\mathcal D\\
\mathcal S\mathcal D
\end{array}\right], \ \ \left[\begin{array}{cc} \mathcal S\mathcal S\\
\mathcal S\mathcal D\\
\end{array}\right], \ \ \left[\begin{array}{cc} \mathcal S\mathcal S\\
\mathcal S\mathcal S
\end{array}\right],
\end{equation}
\begin{equation}\label{5.3}
\left[\begin{array}{cc}\mathcal T \mathcal D\\
\mathcal D\mathcal D\\
\end{array}\right], \ \ \left[\begin{array}{cc} \mathcal T \mathcal T\\
\mathcal D\mathcal D\\
\end{array}\right], \
\ \left[\begin{array}{cc} \mathcal T\mathcal D\\
\mathcal T\mathcal D\\
\end{array}\right], \ \ \left[\begin{array}{cc} \mathcal T\mathcal T\\
\mathcal T\mathcal D\\
\end{array}\right], \ \ \left[\begin{array}{cc} \mathcal T\mathcal T\\
\mathcal T\mathcal T\\
\end{array}\right],
\end{equation}

\begin{equation}\label{5.4}
\left[\begin{array}{cc} \mathcal T \mathcal S\\
\mathcal S\mathcal S\\
\end{array}\right], \ \ \left[\begin{array}{cc} \mathcal T \mathcal T\\
\mathcal S\mathcal S\\
\end{array}\right], \
\ \left[\begin{array}{cc} \mathcal T\mathcal S\\
\mathcal T\mathcal S\\
\end{array}\right], \ \ \left[\begin{array}{cc} \mathcal T\mathcal T\\
\mathcal T\mathcal S\\
\end{array}\right],
\end{equation}
\begin{equation}\label{5.5}
\left[\begin{array}{cc} \mathcal T \mathcal S\\
\mathcal D\mathcal D\\
\end{array}\right], \ \ \left[\begin{array}{cc} \mathcal T \mathcal D\\
\mathcal S\mathcal D\\
\end{array}\right], \
\ \left[\begin{array}{cc} \mathcal T\mathcal S\\
\mathcal S\mathcal D\\
\end{array}\right], \ \ \left[\begin{array}{cc} \mathcal T\mathcal T\\
\mathcal S\mathcal D\\
\end{array}\right], \ \ \left[\begin{array}{cc} \mathcal T\mathcal S\\
\mathcal T\mathcal D\\
\end{array}\right].
\end{equation}

\smallskip
Note, that the structures in the lists (\ref{5.2}) -- (\ref{5.5}) are partially ordered
similarly to (\ref{2.2}) and (\ref{2.8}).  A structure
 \[
P_1=\left[\begin{array}{cc} \mathcal A_1\mathcal B_1\\
\mathcal C_1\mathcal E_1\\
\end{array}\right]
\]
is `stronger' than  a structure
 \[
P_2=\left[\begin{array}{cc} \mathcal A_2\mathcal B_2\\
\mathcal C_2\mathcal E_2\\
\end{array}\right]
\]
if $P_1\ne P_2$ and any  element fitting into  $P_1$ is `stronger than or equal' to
the corresponding element from $P_2$ in the sense of
(\ref{2.2}) and (\ref{2.8}). In this case  the natural map
$P_1\to P_2$ of `weakening of the structure' is defined. In particular this means,
that for any manifold $Q$ in (\ref{1.1}) we have a map
of structure sets
$
P_1(Q)\to P_2(Q).
$

We can also give the list of possible structures, which
correspond to the `exceptional'  structure  $\mathcal S\mathcal T$
on $(X, Y)$ or on $(\partial X, \partial Y)$, similarly to the end of
Section 3:
\begin{equation}\label{5.6}
\left[\begin{array}{cc} \mathcal S \mathcal D\\
\mathcal T\mathcal D\\
\end{array}\right], \ \ \left[\begin{array}{cc} \mathcal S \mathcal S\\
\mathcal T\mathcal D\\
\end{array}\right], \
\ \left[\begin{array}{cc} \mathcal S\mathcal S\\
\mathcal T\mathcal S\\
\end{array}\right], \ \ \left[\begin{array}{cc} \mathcal S\mathcal S\\
\mathcal T\mathcal T\\
\end{array}\right], \ \ \left[\begin{array}{cc} \mathcal T\mathcal S\\
\mathcal T\mathcal T\\
\end{array}\right].
\end{equation}

First we remark that several  structures from (\ref{5.2}) -- (\ref{5.6})
coincide with structures  which  were introduced  above  for the case $(X, \partial X)$.
It is easy to see that  these are  the following structures
\begin{equation}\label{5.7}
\left[\begin{array}{cc} \mathcal S \mathcal D\\
\mathcal D\mathcal D\\
\end{array}\right], \ \ \left[\begin{array}{cc} \mathcal T \mathcal D\\
\mathcal D\mathcal D\\
\end{array}\right], \
\ \left[\begin{array}{cc} \mathcal T\mathcal D\\
\mathcal T\mathcal D\\
\end{array}\right],
\ \ \left[\begin{array}{cc} \mathcal S\mathcal D\\
\mathcal T\mathcal D\\
\end{array}\right],
\ \ \left[\begin{array}{cc} \mathcal T\mathcal T\\
\mathcal T\mathcal T\\
\end{array}\right], \
 \ \left[\begin{array}{cc} \mathcal S\mathcal S\\
\mathcal T\mathcal T\\
\end{array}\right], \ \ \left[\begin{array}{cc} \mathcal T\mathcal S\\
\mathcal T\mathcal T\\
\end{array}\right].
\end{equation}

Note, that it is sufficiently easy  to give direct definition of other
structure sets  from (\ref{5.2}) -- (\ref{5.6}) similarly to Sections 2 and  3.
The naturality of the introduced structures (cf. \cite{BakMur06}, \cite{CavMurSpa06}, \cite{CenMurRep}, \cite{Ranicki79},
\cite{Ranicki81},  and \cite{Wall}) gives an opportunity
to construct
several
commutative diagrams of structures  which provide the
diagrams of structure sets, which are realized on the spectrum level --
similarly to the
diagrams (\ref{2.8}), (\ref{2.12}), (\ref{3.3}), and (\ref{3.5}).

Next we give several examples of such definitions and constructions
of commutative diagrams.

\begin{example}\label{Example 1} Here we consider structures which arise naturally
in the $rel_{\partial}$-case for (\ref{1.1}) (cf. \cite{CavMurSpa06}, \cite{CenMurRep}, \cite{Ranicki79},
\cite{Ranicki81},  and \cite{Wall}). These  are the structures on $Q$
in (\ref{1.1}) which have
the form $\left[\begin{array}{cc} \mathcal *\mathcal D\\
\mathcal *\mathcal D\\
\end{array}\right]$.

In particular, the first four  structures in (\ref{5.7}) are the structures of this type.
The first two of them  coincide with
 the structures $\mathcal S\mathcal D$ and $\mathcal T \mathcal D$, respectively,
  on the manifold with boundary
$(X\setminus Y, \partial (X\setminus Y))$. The third structure
is a $\mathcal T \mathcal D$-structure on $(X, \partial X)$. The fourth
structure  is an $\mathcal S\mathcal D$-structure on $(X, \partial X)$.

The   $\left[\begin{array}{cc} \mathcal S \mathcal D\\
\mathcal S\mathcal D\\
\end{array}\right]$-structure on a pair in (\ref{1.1})
corresponds to  $rel_{\partial}$-case structure set
\begin{equation}\label{5.8}
\mathcal S^{\partial}(X,Y,\xi)=\mathcal S^{\partial}(X,Y,\xi; \partial X)
\end{equation}
 defined in \cite{Ranicki81} (cf. also \cite{CenMurRep}).
In this case,  consider an $s$-triangulation
\begin{equation}\label{5.9}
(f,\partial f)\colon (M, \partial M)\to (X, \partial X)
\end{equation}
which is transversal to $(Y, \partial Y)$ with a transversal preimage
$(N, \partial N)$ such that $\partial f\colon \partial M\to \partial X$
is a homeomorfism and  the maps
\[
g=f_N\colon N\to Y,  \ \  \text{and} \ \ f|_{M\setminus N}=h \colon
 (M\setminus N, \partial (M\setminus N))\to (X\setminus Y, \partial (X\setminus Y))
 \]
 are $s$-triangulations (cf. \cite{BakMur06}, \cite{CenMurRep}, and \cite{Ranicki81}).
The set of concordance classes rel boundary
 of maps  in (\ref{5.9}) gives the structure set in (\ref{5.8}).

Using the same line of argument  as in Section 4  (cf. also \cite{BakMur06} and \cite{CenMurRep}),
we can define
$\left[\begin{array}{cc} \mathcal T\mathcal D\\
\mathcal S\mathcal D\\
\end{array}\right]$-structures on a pair in (\ref{1.1}), which
 corresponds to the structure set $\mathcal N\mathcal S^{\partial}(X,Y)$
($\mathcal N\mathcal S$-structure set relative boundary, as usual we suppose
that the restriction to the boundary is a homeomorphism).
\end{example}

\begin{definition} For a manifold pair $Q$  in (\ref{1.1}),
let
\[
(f, \partial f)\colon (M, \partial M) \to (X, \partial X)
\]
be a $t$-triangulation with a homeomorphism $\partial f$. Let $f$ be
transversal to $(Y, \partial Y)$ with  $(N, \partial N)=f^{-1}(Y, \partial Y)$, and  the restriction
$
g=f|_N\colon N\to Y
$
is a simple homotopy equivalence.
Two such maps
\[
f_i \colon M_i\to X,  \ \ N_i= f_i^{-1}(Y),  \ \ g_i=f_i|_{N_i}
\  (i=0,1)
\]
are {\sl  concordant}  if there
exists a $t$-triangulation
\[
(F;G,f_0, f_1)\colon
(W; M_0, M_1)\to
(X\times I;\partial X\times I,   X\times\{0\},
 X\times\{1\})
\]
 with  the following
properties:

\noindent (i) $\partial W = \partial M_0\times I\cup M_0\cup M_1$
and  $F|_{M_i}=f_i \ (i=0,1)$;

\noindent (ii) $G=g_0\times {\operatorname Id}\colon \partial M_0\times I\to \partial X\times I$;

\noindent (iii) $F$ is transversal to $Y\times I$  with $F^{-1}(Y)= V$ and $\partial V
=\partial N_0\times I\cup N_0\cup N_1$;

\noindent (iv) the restriction $F|_V$  is  an  $s$-triangulation of the 4-ad
\[
\left(F|_V; G|_{\partial N_0\times I},  f_0|_{N_0},f_1|_{N_1}\right)  \colon (V;N_0, N_1)\to (Y\times I; \partial Y\times I, Y\times\{0\},  Y\times\{1\})
\]
with
\[
G|_{\partial N_0\times I}=g_0|_{\partial N_0}\times {\operatorname Id}\colon \partial N_0\times I\to \partial Y\times I.
\]

The set of equivalence classes of such maps
is denoted by  $\mathcal N\mathcal S^{\partial}(X,Y)$ (cf.  \cite{BakMur06}).
\smallskip
\end{definition}

Similarly to Section 4 we can describe relations between structure
sets of Example \ref{Example 1}. The  structure set $\mathcal S^{\partial}(X,Y,\xi)$
is realized on the spectrum  level (cf. \cite{CenMurRep} and \cite{Ranicki81}) by a
spectrum $\mathbb S^{\partial}(X,Y, \xi)$ with
\[
\pi_i(\mathbb S^{\partial}(X,Y,\xi))=\mathcal S^{\partial}_i(X,Y,\xi) \ \ \text{and} \ \ \mathcal S^{\partial}_{n+1}(X,Y,\xi)=
\mathcal S^{\partial}(X,Y,\xi).
\]
This spectrum fits into the commutative diagram of cofibrations (cf. \cite{CenMurRep} and  \cite{Ranicki81})
\begin{eqnarray}\label{5.10}
\Omega\mathbb S^{\partial}(X\setminus Y, \partial(X\setminus Y))
\longrightarrow&(X\setminus Y)_+\land \mathbf L_{\bullet}&\longrightarrow
\mathbb L(\pi_1(X\setminus Y))\nonumber\\
\downarrow\qquad\qquad&\downarrow&\quad\qquad\downarrow\nonumber\\
\Omega\mathbb S^{\partial}(X, Y, \xi)\longrightarrow&
  X_+\land \mathbf L_{\bullet}&\longrightarrow
    \Sigma^q\mathbb L\mathbb P(F)  \\
\downarrow\qquad\qquad&\downarrow&\qquad\quad\downarrow\nonumber\\
\Sigma^{q-1} \mathbb S^{\partial}(Y)\longrightarrow&\Sigma^{q} \left(Y_+\land \mathbf L_{\bullet}\right)
&\longrightarrow\Sigma^{q} \mathbb L(\pi_1(Y))\nonumber
\end{eqnarray}
which is $rel_{\partial}$-version of the diagram (\ref{4.16}). The homotopy long exact sequences of (\ref{5.10})
provides the commutative diagram of exact sequences
\begin{equation}\label{5.11}
\xymatrix{
 & \vdots \ar[d] & \vdots \ar[d] & \vdots \ar[d]   & &\\
\dots \ar[r] & \mathcal S_{n+1}^{\partial}(X\setminus Y, \partial(X\setminus
Y)) \ar[r] \ar[d] & H_n(X\setminus Y;\mathbf L_{\bullet}) \ar[r] \ar[d] & L_n(\pi_1(X\setminus Y)) \ar[r] \ar[d] & \dots \\
\dots \ar[r] & \mathcal  S_{n+1}^{\partial}(X,Y, \xi) \ar[r] \ar[d] & H_n(X;\mathbf L_{\bullet}) \ar[r] \ar[d] & LP_{n-q}(F) \ar[r] \ar[d]& \dots \\
\dots \ar[r] & \mathcal  S_{n-q+1}^{\partial}(Y) \ar[r] \ar[d] & H_{n-q}(Y;\mathbf L_{\bullet}) \ar[d] \ar[r] & L_{n-q}(\pi_1(Y)) \ar[d] \ar[r] &\dots \\
  & \vdots  & \vdots  &\vdots  & }
\end{equation}
which is the $rel_{\partial}$-version of the diagram (\ref{4.18}).

Two left squares  in (\ref{5.11}) correspond to the following commutative
diagram of structures
\begin{eqnarray}
\left[\begin{array}{cc} \mathcal S \mathcal D\\
\mathcal D\mathcal D\\
\end{array}\right] &\longrightarrow&
 \left[\begin{array}{cc} \mathcal T \mathcal D\\
\mathcal D\mathcal D\\
\end{array}\right]\nonumber\\
\downarrow\quad&& \quad\downarrow\nonumber\\
\left[\begin{array}{cc} \mathcal S \mathcal D\\
\mathcal S\mathcal D\\
\end{array}\right] &\longrightarrow&
 \left[\begin{array}{cc} \mathcal T \mathcal D\\
\mathcal T\mathcal D\\
\end{array}\right]\\
\downarrow\quad&&\quad \downarrow\nonumber\\
\left[\begin{array}{cc} \mathcal S \mathcal D\\
\end{array}\right] &\longrightarrow&
 \left[\begin{array}{cc} \mathcal T \mathcal D\\
\end{array}\right] \nonumber
\end{eqnarray}
in which all arrows of the upper square and the bottom arrow correspond to the
`weakening of structures' and  the bottom vertical arrows  correspond to the
restrictions of the structures to the submanifold  $(Y, \partial
Y)$.
\smallskip

\begin{theorem} There exists a spectrum $\mathbb N\mathbb S^{\partial}(X, Y)$
with homotopy groups
\begin{equation}\label{5.13}
\pi_i(\mathbb N\mathbb S^{\partial}(X, Y))= \mathcal N \mathcal
S^{\partial}_i(X,Y), \ \
\mathcal N \mathcal S^{\partial}(X,Y)=\mathcal N \mathcal S^{\partial}_n(X,Y).
\end{equation}
\end{theorem}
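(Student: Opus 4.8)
The plan is to transport, into the $rel_{\partial}$ setting, the construction of the spectrum $\mathbb{N}\mathbb{S}(X,Y)$ carried out in Section~4 for a closed manifold pair. Recall that there $\mathbb{N}\mathbb{S}(X,Y)$ was obtained as the homotopy fiber of a canonical composite $\mathbb{L}(\pi_1(X\setminus Y))\to \mathbb{S}(X,Y,\xi)$ extracted from the extension of diagram~(\ref{4.16}), and that this single definition automatically produced the three cofibration sequences~(\ref{4.21}). Here the relevant input is diagram~(\ref{5.10}), which is the exact $rel_{\partial}$ analogue of~(\ref{4.16}): all of its rows and columns are cofibrations of spectra, and the argument that assembled~(\ref{4.16}) from its defining data applies verbatim to extend~(\ref{5.10}) to a full homotopy-commutative $3\times 3$ array of spectra with cofiber rows and columns (with $\Sigma^{q}\mathbb{L}\mathbb{P}(F)$ in the middle and $\Sigma^{q}\mathbb{L}(\pi_1(Y))$ at the corner, exactly as in~(\ref{4.16})).

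Granting this, I would define $\mathbb{N}\mathbb{S}^{\partial}(X,Y)$ to be the homotopy fiber of the composite map $\mathbb{L}(\pi_1(X\setminus Y))\to \mathbb{S}^{\partial}(X,Y,\xi)$ read off from the extended diagram~(\ref{5.10}), in precise analogy with the definition of $\mathbb{N}\mathbb{S}(X,Y)$. The octahedral / iterated-cofiber manipulations that produced~(\ref{4.21}) then yield, with no essential change, the three cofibration sequences
\begin{align*}
\mathbb{N}\mathbb{S}^{\partial}(X,Y)\longrightarrow\ &\mathbb{L}(\pi_1(X\setminus Y)) \longrightarrow \mathbb{S}^{\partial}(X,Y,\xi),\\
\mathbb{N}\mathbb{S}^{\partial}(X,Y)\longrightarrow\ &X_+\land \mathbf{L}_{\bullet} \longrightarrow \Sigma^{q}\, \mathbb{L}(\pi_1(Y)),\\
\mathbb{N}\mathbb{S}^{\partial}(X,Y)\longrightarrow\ &\Sigma^{q-1}\mathbb{S}^{\partial}(Y) \longrightarrow \Sigma\left((X\setminus Y)_+\land \mathbf{L}_{\bullet}\right),
\end{align*}
which are the $rel_{\partial}$ counterparts of~(\ref{4.21}), and which in turn give the $rel_{\partial}$ braids analogous to~(\ref{4.22})--(\ref{4.24}).

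It then remains to set $\mathcal{N}\mathcal{S}^{\partial}_i(X,Y):=\pi_i(\mathbb{N}\mathbb{S}^{\partial}(X,Y))$ and to check the degree bookkeeping, namely that the geometric structure set $\mathcal{N}\mathcal{S}^{\partial}(X,Y)$ of the Definition above is recovered as $\mathcal{N}\mathcal{S}^{\partial}_n(X,Y)$. For this I would feed the second cofibration above into its homotopy long exact sequence and compare with the identifications already available on the spectrum level, that is $\pi_\ast(X_+\land\mathbf{L}_{\bullet})=H_\ast(X;\mathbf{L}_{\bullet})$ and $\pi_\ast(\Sigma^{q}\mathbb{L}(\pi_1(Y)))=L_{\ast-q}(\pi_1(Y))$, together with the identification $\mathcal{S}^{\partial}(Y)\cong\mathcal{S}^{\partial}_{\bullet}(Y)$ and the homeomorphism normalization on $\partial X$; this is the same index shift that forces $\mathcal{N}\mathcal{S}(X,Y)\cong\mathcal{N}\mathcal{S}_n(X,Y)$ rather than $\mathcal{N}\mathcal{S}_{n+1}$ in the closed case. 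Finally one verifies that the geometrically defined concordance set $\mathcal{N}\mathcal{S}^{\partial}(X,Y)$ of the Definition maps isomorphically onto $\pi_n$ of this spectrum, arguing as in \cite{BakMur06} by a transversality and relative $\pi$--$\pi$ argument on the $(Y,\partial Y)$-preimage with the boundary restriction held fixed by a homeomorphism.

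I expect the main obstacle to be bookkeeping rather than conceptual: verifying that~(\ref{5.10}) genuinely extends to a homotopy-commutative $3\times 3$ array of cofibrations with all suspension degrees exactly as written, so that the three cofibrations above are simultaneously valid and mutually compatible, and then tracking those degrees carefully enough that the resulting $\pi_n$ is the geometric $\mathcal{N}\mathcal{S}^{\partial}(X,Y)$. The $rel_{\partial}$ normalization — insisting that the restriction of every structure to $\partial X$ is a homeomorphism, which is what makes the outer column of~(\ref{5.10}) behave like the corresponding column of the diagram for $(X\setminus Y,\partial(X\setminus Y))$ — must be kept consistent throughout; this is precisely the point at which the closed-pair argument of Section~4 requires the small modifications recorded in the Definition above.
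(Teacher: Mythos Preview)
Your proposal is correct and follows essentially the same approach as the paper: the paper defines $\mathbb{N}\mathbb{S}^{\partial}(X,Y)$ using diagram~(\ref{5.10}) in direct analogy with the construction of $\mathbb{N}\mathbb{S}(X,Y)$ from diagram~(\ref{4.16}), records exactly the three cofibrations you wrote down (these are~(\ref{5.14}) in the paper), and then refers to \cite{BakMur06}, \cite{CenMurRep}, \cite{Ranicki81}, and \cite{Wall} for the geometric identification. Your write-up in fact gives more detail on the index bookkeeping and the geometric identification than the paper does.
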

\begin{proof} We can use the diagram (\ref{5.10}) similarly to the diagram
(\ref{4.16}) to define a spectrum $\mathbb N\mathbb S^{\partial}(X, Y)$
fitting into the cofibrations
\begin{eqnarray}\label{5.14}
\mathbb N\mathbb S^{\partial}(X, Y)\longrightarrow&\mathbb L(\pi_1(X\setminus Y))&\longrightarrow\mathbb S^{\partial}(X, Y, \xi)\nonumber\\
\mathbb N\mathbb S^{\partial}(X, Y)\longrightarrow &X_+\land \mathbf L_{\bullet}&\longrightarrow\Sigma^q \mathbb L(\pi_1(Y))\\
\mathbb N\mathbb S^{\partial}(X, Y)\longrightarrow &\Sigma^{q-1}\mathbb S^{\partial}(Y)&\longrightarrow \Sigma \left((X\setminus Y)_+\land \mathbf
L_{\bullet}\right).\nonumber
\end{eqnarray}
which are similar to the cofibrations in (\ref{4.21}).
From here the result follows using standard argument
(cf. \cite{BakMur06}, \cite{CenMurRep}, \cite{Ranicki81},  and \cite{Wall}).
\end{proof}

\begin{theorem} The groups  $\mathcal N \mathcal S^{\partial}_i(X,Y)$ fit into the following braids
of exact sequences
\begin{equation}\label{5.15}
\xymatrix{\xyC{0.1pc}
\ar[r] & \mathcal S_{n-q+2}^{\partial}( Y ) \ar[dr] \ar[rr]  &   & H_{n}(X\setminus Y; \mathbf L_{\bullet}) \ar[rr] \ar[dr] &  & L_{n}(\pi_1(X\setminus Y)) \ar[r] \ar[dr] &\\
\ar[ur] \ar[dr] &   & \mathcal  S_{n+1}^{\partial}(X\setminus Y, \partial(X\setminus Y)) \ar[ur] \ar[dr] & \circledast & \mathcal N\mathcal S_{n}^{\partial}(X,Y) \ar@{}[d]|-{\ast} \ar[ur] \ar[dr] &  & \\
\ar[r] & L_{n+1}(\pi_1(X\setminus Y)) \ar[rr] \ar[ur] &  & \mathcal  S_{n+1}^{\partial}(X,Y,\xi)  \ar[rr] \ar[ur] &   & \mathcal  S_{n-q+1}^{\partial}(Y) \ar[ur] \ar[r] & }
\end{equation}
\begin{equation}\label{5.16}
\xymatrix{\xyC{0.2pc}
\ar[r] & H_{n+1}(X;\mathbf L_{\bullet}) \ar[dr] \ar[rr]  &   & L_{n-q+1}(\pi_1(Y)) \ar[rr] \ar[dr] &  & \mathcal  S_{n-q+1}^{\partial}(Y) \ar[r] \ar[dr] &\\
\ar[ur] \ar[dr] &   & H_{n-q+1}(Y; \mathbf L_{\bullet}) \ar[ur] \ar[dr] &  & \mathcal  N\mathcal  S_{n}^{\partial}(X,Y) \ar@{}[d]|-{\ast \ast} \ar[ur] \ar[dr] & \circledast \circledast & \\
\ar[r] & \mathcal S_{n-q+2}^{\partial}(Y) \ar[rr] \ar[ur] &  & H_{n}(X\setminus Y;\mathbf L_{\bullet})  \ar[rr] \ar[ur] &   & H_{n}(X; \mathbf L) \ar[ur] \ar[r] & }
\end{equation}
\begin{equation}\label{5.17}
\xymatrix{\xyC{0.2pc}
\ar[r] & {L}_{n+1}(\pi_1(X\setminus Y)) \ar[dr] \ar[rr]  &   & \mathcal  S_{n+1}^{\partial}(X,Y, \xi ) \ar[rr] \ar[dr] & \ar@{}[d]|-{\ast \ast \ast} & H_{n}(X; \mathbf L_{\bullet}) \ar[r] \ar[dr] &\\
\ar[ur] \ar[dr] &   & {LP}_{n-q+1}(F) \ar[ur] \ar[dr] &  & \mathcal  N \mathcal  S_{n}^{\partial}(X,Y) \ar[ur] \ar[dr] &  & \\
\ar[r] & H_{n+1}(X; \mathbf L_{\bullet}) \ar[rr] \ar[ur] &  & {L}_{n+1-q}(\pi_1(Y))  \ar[rr] \ar[ur] &   & {L}_{n}(\pi_1(X\setminus Y)) \ar[ur] \ar[r] & }
\end{equation}
which are  realized on the spectrum level.
\end{theorem}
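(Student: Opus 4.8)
The plan is to mimic, in the $rel_{\partial}$ setting, the construction that produced the braids (\ref{4.22})--(\ref{4.24}) for the closed manifold pair $(X,Y)$. The input data is the commutative diagram of cofibrations (\ref{5.10}) for $(X,Y,\xi)$ together with the spectrum $\mathbb N\mathbb S^{\partial}(X, Y)$ just produced in the previous theorem, which fits into the three cofibrations (\ref{5.14}). Since all of these are genuine cofibrations of spectra, each one has an associated Puppe/homotopy long exact sequence of homotopy groups, and the three sequences emanating from a single spectrum sitting in a $3\times 3$ diagram of cofibrations weave together into a braid in exactly the standard way (this is the same formal mechanism used to pass from (\ref{4.16}) to (\ref{4.18}) and from (\ref{4.21}) to (\ref{4.22})--(\ref{4.24})).

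First I would record the long exact sequences attached to the three cofibrations in (\ref{5.14}). Writing $\pi_i(\mathbb N\mathbb S^{\partial}(X,Y))=\mathcal N\mathcal S^{\partial}_i(X,Y)$ and using the identifications $\pi_i(\mathbb L(\pi_1(?)))=L_i(\pi_1(?))$, $\pi_i((X\setminus Y)_+\wedge\mathbf L_{\bullet})=H_i(X\setminus Y;\mathbf L_{\bullet})$, $\pi_i(\mathbb S^{\partial}(X,Y,\xi))=\mathcal S^{\partial}_i(X,Y,\xi)$, $\pi_i(\mathbb S^{\partial}(Y))=\mathcal S^{\partial}_i(Y)$, and taking account of the suspensions $\Sigma^{q}$, $\Sigma^{q-1}$, $\Sigma$ appearing in (\ref{5.14}) (these produce the degree shifts $n-q$ versus $n$ that already show up in (\ref{4.22})--(\ref{4.24})), one gets three long exact sequences through the term $\mathcal N\mathcal S^{\partial}_n(X,Y)$. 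Then I would invoke the purely homological fact that a spectrum lying at the common vertex of a $3\times 3$ grid of cofibrations, with the rows and columns compatible, organizes its three long exact sequences into a braid of interlocking exact sequences; concretely, for (\ref{5.15}) one uses the first and third cofibrations of (\ref{5.14}) together with the middle row of (\ref{5.10}), for (\ref{5.16}) the second and third cofibrations together with the relevant column, and for (\ref{5.17}) the first and second cofibrations together with the row $\Omega\mathbb S^{\partial}(X,Y,\xi)\to X_+\wedge\mathbf L_{\bullet}\to\Sigma^q\mathbb L\mathbb P(F)$. The commutativity needed to glue the sequences is inherited from the homotopy commutativity of (\ref{5.10}) and from the naturality of the construction of $\mathbb N\mathbb S^{\partial}(X,Y)$ as a homotopy fiber.

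Finally I would identify the marked squares and triangles: the square $\circledast$ in (\ref{5.15}), the square $\circledast\circledast$ in (\ref{5.16}), and the triangles $\ast$, $\ast\ast$, $\ast\ast\ast$ should be checked to coincide with the squares and triangles of structures obtained from the $rel_{\partial}$ diagrams of Example \ref{Example 1} and its successors — exactly as the analogous squares $\ast$, $\ast\ast$ in (\ref{4.22})--(\ref{4.23}) were matched with pieces of (\ref{3.1}) and (\ref{3.2}). This identification is routine once the maps in the braids are traced back through (\ref{5.10}) and (\ref{5.14}). I expect the only real work to be bookkeeping: keeping the suspension shifts consistent so that the indices in (\ref{5.15})--(\ref{5.17}) come out as written, and verifying that the connecting maps in the three long exact sequences agree on the nose (not just up to sign) so that the braid genuinely commutes. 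This is where one must be careful, but it is the same verification already carried out for the closed pair $(X,Y)$ in \cite{BakMur06}, so I would cite that and the standard apparatus of \cite{BakMur03}, \cite{CenMurRep}, \cite{Ranicki81}, and \cite{Wall} rather than redo it from scratch.
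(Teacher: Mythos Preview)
Your proposal is correct and follows exactly the approach of the paper: the paper's proof is a single sentence stating that the result follows from diagram (\ref{5.10}) similarly to the construction of the diagrams (\ref{4.22})--(\ref{4.24}), citing \cite{BakMur03}, \cite{BakMur06}, and \cite{Muranov}. You have simply unpacked this analogy in more detail than the paper does; the extra paragraph on identifying the marked squares and triangles actually corresponds to the remark the paper makes \emph{after} the proof rather than to the proof itself.
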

\begin{proof} The result follows from diagram (\ref{5.10})
similarly to the construction of the diagrams (\ref{4.22})--(\ref{4.24}) 
(cf. \cite{BakMur03}, \cite{BakMur06},   and \cite{Muranov}).
\end{proof}
Note that the diagrams (\ref{5.15}) -- (\ref{5.17}), in particular,  give several
natural relations
between structures from (\ref{5.2}) -- (\ref{5.7}) for (\ref{1.1}) and structures for $(Y, \partial Y)$
(cf. e.g.,  the  commutative triangles
$*$, $**$,   $***$, and the commutative squares  $\circledast$ and  $\circledast \circledast$).
\smallskip

\begin{example}\label{Example 2} To understand some of the structures from (\ref{5.2}) --
(\ref{5.6}) we need to use  structures on  manifold triads \cite{Wall}. Let
$\mathcal X=(X; \partial_0X,\partial_1 X)$ be a manifold triad \cite{Wall},
where
\[
\partial X=  \partial_0X\cup \partial_1 X \ \ \text{and let }  \partial_{\emptyset} X=  \partial_1X\cap \partial_0 X.
\]
We can write down this triad  in the following form
\begin{eqnarray}\label{5.17'}
\partial_{\emptyset} X &\longrightarrow& \partial_0X \nonumber\\
\downarrow& &\downarrow\\
\partial_{1} X &\longrightarrow& X \nonumber
\end{eqnarray}
where all the maps are induced by inclusions.
In this case, the results from
\S 10 of \cite{Wall} (cf. also \cite{Ranicki79} and \cite{Ranicki81}) provide the
following structure sets:

$\mathcal T(\mathcal X)$  --- the set of classes of normal bordisms of $t$-triangulations
of $\mathcal X$,

$\mathcal S(\mathcal X)$ ---  the set of classes of $s$-triangulations of $\mathcal X$,

$\mathcal T^{\partial_1X} (\mathcal X)$ --- the set of classes of normal bordisms of $t$-triangulations
of $\mathcal X$ rel $\partial_1X$,

$\mathcal S^{\partial_1X }(\mathcal X)$ --- the set of classes of $s$-triangulations of $\mathcal X$
rel $\partial_1X$,

and various structure sets of the pair $(\partial_1X,
\partial_{\emptyset}X)$. 

Let
\begin{equation}\label{5.18}
F=\left(\begin{array}{ccc}
\pi_1(\partial_{\emptyset} X) &\longrightarrow& \pi_1(\partial_0X) \\
\downarrow& &\downarrow\\
\pi_1(\partial_{1} X) &\longrightarrow& \pi_1(X )
\end{array}\right)
\end{equation}
be the square of fundamental groups with orientations. The structure
sets above   fit into the following commutative diagram of exact
sequences (cf.  \cite{Ranicki79}, \cite{Ranicki81}, and \cite{Wall})
 \begin{equation}\label{5.19}
\xymatrix{\xyC{0.8pc}\xyV{0.9pc}
&\ar[d]&     &\ar[d] &    & \ar[d] &       \\
\ar[r] &\mathcal S^{\partial_1X }_{n+1}(\mathcal X)
&\ar[r]&
\mathcal T^{\partial_1X }_{n}(\mathcal X)& \ar[r] & L_n(\pi_1(\partial_0X)\rightarrow \pi_1(X))\ar[r]& \\
&\ar[d]&   &\ar[d] &    & \ar[d]&    \\
\ar[r]&
\mathcal S_{n+1}(\mathcal X)  &\ar[r] &\mathcal T_{n}(\mathcal X)&
\ar[r] &L_n(F)\ar[r]& \\
&\ar[d]&   &\ar[d] &    & \ar[d]&   \\
\ar[r]& \mathcal S_{n}(\partial_1X, \partial_{\emptyset}X) \ar[d]
&\ar[r]&
\mathcal
T_{n-1}(\partial_1 X, \partial_{\emptyset}X)\ar[d]& \ar[r] &
L_{n-1}(\pi_1(\partial_{\emptyset}X)\rightarrow\pi_1(\partial_1 X))\ar[d]\ar[r]&\\
&&&&&}
\end{equation}
where $ \mathcal T_n(\mathcal X)\cong \mathcal T(\mathcal X)$, $\mathcal S_{n+1}(\mathcal
X)\cong \mathcal S(\mathcal X)$,
 $\mathcal T^{\partial_1X}_n (\mathcal X)\cong \mathcal
T^{\partial_1X} (\mathcal X)$, $\mathcal S^{\partial_1X }_{n+1}(\mathcal X)
\cong \mathcal S^{\partial_1X }(\mathcal X)$, $\mathcal S_{n}(\partial_1X,
\partial_{\emptyset}X)\cong \mathcal S(\partial_1X,
\partial_{\emptyset}X)$, and $\mathcal T_{n-1}(\partial_1 X, \partial_{\emptyset}X)
\cong \mathcal T(\partial_1 X, \partial_{\emptyset}X)$.
\end{example}

Wall pointed out the existence of mixed structures (cf. pages 115 and 116 in 
\cite{Wall}) of $n$-ads. Now, similarly to the definition of the $\mathcal T\mathcal
S$-structure on $(X, \partial X)$ in Section 2, we can define a
$\mathcal T\mathcal S$-structure on $ (\mathcal X;
\partial_1X)$ in (\ref{5.17'}).
This is the $\mathcal T$-structure on $\mathcal X$ whose restriction to
$(\partial_1X,
\partial_{\emptyset}X)$ is the $\mathcal S$-structure.

\begin{definition} Let  $f\colon  \mathcal  M \to  \mathcal X$ be a
$t$-triangulation
\begin{equation}\label{5.20}
\left(\begin{array}{ccc}
\partial_{\emptyset} M &\longrightarrow& \partial_0M \\
\downarrow& & \downarrow\\
\partial_{1} M &\longrightarrow& M \\
\end{array}\right)\longrightarrow
\left(\begin{array}{ccc}
\partial_{\emptyset} X &\longrightarrow& \partial_0X \\
\downarrow& &\downarrow\\
\partial_{1} X &\longrightarrow& X \\
\end{array}\right)
\end{equation}
of a triad in (\ref{5.17'}) such that the restriction
\[
f|_{(\partial_1 M, \partial_{\emptyset}M)} \colon (\partial_1 M,
\partial_{\emptyset}M) \to (\partial_1 X, \partial_{\emptyset}X)
\]
is an $s$-triangulation. Two such maps  $f_i \colon  \mathcal M_i \to
\mathcal X$, $i = 0, 1$,  are {\sl  concordant} if there
 exists a
$t$-triangulation  of the $4$-ad $\mathcal X\times I$ (cf. page 111 in 
\cite{Wall})
\[
F\colon (W; W_0\cup  W_1, V_0, V_1)\to (X\times I; X\times \{0\}\cup
X\times \{1\}, \partial_0X\times I, \partial_1X\times I)
\]
such that
\[
\partial V_i=\partial_iM_0\cup \partial_iM_1\cup S
\]
and
$
F|_{V_1}
$
 is  an $s$-triangulation
\[
(V_1;\partial_1 M_0,
\partial_1M_1, S)\to (\partial_1 X\times I;  \partial_1 X\times\{0\},
\partial_1  X\times\{1\}, \partial_{\emptyset}X\times I).
\]
The set of equivalence classes of such maps is denoted by $\mathcal T
\mathcal S(\mathcal X; \partial_1X)$.
\end{definition}
\smallskip

The diagram (\ref{5.19})  is realized on the spectrum level  by the
following homotopy commutative diagram of cofibrations
\begin{equation}\label{5.21}
\xymatrix{
 & \vdots \ar[d] & \vdots \ar[d] & \vdots \ar[d]   & &\\
\dots \ar[r] & \Omega\mathbb S^{\partial_1 X}(\mathcal X) \ar[r] \ar[d] & (X/\partial_0X)_+\land \mathbf L_{\bullet} \ar[r] \ar[d] & L(\pi_1(\partial_0 X)\to \pi_1(X)) \ar[r] \ar[d] & \dots \\
\dots \ar[r] & \Omega \mathbb S(\mathcal X) \ar[r] \ar[d] & (X/\partial X)_+\land \mathbf
L_{\bullet} \ar[r] \ar[d] & \mathbb L{F} \ar[r] \ar[d]& \dots \\
\dots \ar[r] & \mathbb S(\partial_1X, \partial_{\emptyset} X) \ar[r] \ar[d] & \Sigma
(\partial_1 X/\partial_{\emptyset} X)_+\land \mathbf L_{\bullet} \ar[d] \ar[r] & \Sigma \mathbb L(\pi_1(\partial_{\emptyset} X)\to
\pi_1(\partial_1X)) \ar[d] \ar[r] &\dots \\
  & \vdots  & \vdots  &\vdots  & }
\end{equation}
which is similar to the diagram (\ref{4.5}).

\begin{theorem} There exists a spectrum $\mathbb T\mathbb S(\mathcal X;
\partial_1X)$ with homotopy groups
$
\pi_i( \mathbb T\mathbb S(\mathcal X;
\partial_1X))=\mathcal T\mathcal S_{i}(\mathcal X; \partial_1X)
$
fitting into the cofibrations
\begin{equation}\label{5.22}
\xymatrix{
 \Omega \mathbb T\mathbb S (\mathcal X;\partial_1X)&\ar[r]&\mathbb L(\pi_1(\partial_0X)\to \pi_1(X))&\ar[r]&\mathbb S(\mathcal X),\\
\Omega \mathbb T\mathbb S(\mathcal X;\partial_1X)&\ar[r]&(X/\partial X)_+\land\mathbf L_{\bullet}&\ar[r]&\Sigma \mathbb L(\pi_1(\partial_{\emptyset} X)\to \pi_1(\partial_1 X)),\\
\Omega \mathbb T\mathbb S(\mathcal X;\partial_1X) &\ar[r]&\mathbb S(\partial_1 X, \partial_{\emptyset}X)&\ar[r]&\Sigma \left((X/\partial_0X)_+\land\mathbf L_{\bullet}\right)\\
}
\end{equation}
such that
$
\mathcal T \mathcal S(\mathcal X; \partial_1X)=\mathcal T \mathcal S_n(\mathcal X;
\partial_1X).
$
\end{theorem}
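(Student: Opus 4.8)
The plan is to define $\mathbb T\mathbb S(\mathcal X;\partial_1X)$ formally, exactly as the spectrum $\mathbb T\mathbb S(X,\partial X)$ was defined in Section 4 from the diagram (\ref{4.5}). The composite of the top row and the left column of (\ref{5.21}) is a map $\mathbb L(\pi_1(\partial_0X)\to\pi_1(X))\to\mathbb S(\mathcal X)$, and I set $\mathbb T\mathbb S(\mathcal X;\partial_1X)$ to be (a loop of) its homotopy cofiber. Since this is a construction in the stable homotopy category, the existence assertion is immediate, and the first cofibration in (\ref{5.22}) is the defining one (up to a shift).

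First I would derive the remaining two cofibrations in (\ref{5.22}). This is the same octahedral-axiom / $3\times 3$-lemma argument used to pass from (\ref{4.5}) to (\ref{4.10}): all rows and columns of (\ref{5.21}) are cofibrations, so the homotopy cofiber of the chosen composite can be computed along each of the three routes through the $3\times 3$ square, giving cofibrations with middle terms $(X/\partial X)_+\wedge\mathbf L_\bullet$ and $\mathbb S(\partial_1X,\partial_\emptyset X)$ and the appropriate suspensions of the outer terms of (\ref{5.21}). No new geometric input is needed here; it is word-for-word parallel to the treatment of $(X,\partial X)$.

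Next I would compute homotopy groups. Put $\pi_i(\mathbb T\mathbb S(\mathcal X;\partial_1X))=\mathcal T\mathcal S_i(\mathcal X;\partial_1X)$. Splicing the homotopy long exact sequences of the three cofibrations in (\ref{5.22}) produces braids of exact sequences relating $\mathcal T\mathcal S_*$ to $\mathcal S_*(\mathcal X)$, $\mathcal T_*(\mathcal X)$, $\mathcal S_*(\partial_1X,\partial_\emptyset X)$, $\mathcal T_*(\partial_1X,\partial_\emptyset X)$, the groups $L_*(F)$, $L_*(\pi_1(\partial_0X)\to\pi_1(X))$, $L_*(\pi_1(\partial_\emptyset X)\to\pi_1(\partial_1X))$, and the relevant $\mathbf L_\bullet$-homology groups, all realized on the spectrum level --- these are the triad analogues of (\ref{4.11})--(\ref{4.13}). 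To finish I must identify the homotopy group in the relevant degree with the geometric set $\mathcal T\mathcal S(\mathcal X;\partial_1X)$ of the Definition preceding the theorem. For this I would use the already established identifications $\mathcal S_{n+1}(\mathcal X)\cong\mathcal S(\mathcal X)$, $\mathcal T_n(\mathcal X)\cong\mathcal T(\mathcal X)$, $\mathcal S_n(\partial_1X,\partial_\emptyset X)\cong\mathcal S(\partial_1X,\partial_\emptyset X)$, $\mathcal T_{n-1}(\partial_1X,\partial_\emptyset X)\cong\mathcal T(\partial_1X,\partial_\emptyset X)$ from Example \ref{Example 2}, together with the geometric surgery exact sequence for mixed structures on manifold $n$-ads (Wall, pp.\ 115--116): a $t$-triangulation of $\mathcal X$ whose restriction to $(\partial_1X,\partial_\emptyset X)$ is an $s$-triangulation carries a well-defined obstruction in $L_n(\pi_1(\partial_0X)\to\pi_1(X))$ to being concordant rel $\partial_1X$ to an $s$-triangulation of $\mathcal X$, and this obstruction sits in the expected exact sequence. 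Comparing this geometric sequence with the algebraic braid via the five lemma --- the neighbouring terms being pinned down by the classical case, the $rel_\partial$ case, and Example \ref{Example 2} --- gives $\mathcal T\mathcal S(\mathcal X;\partial_1X)\cong\mathcal T\mathcal S_n(\mathcal X;\partial_1X)$, and functoriality of the assembly map (\cite{Ranicki81}, \cite{Wall}) ensures the identification is compatible with all natural maps.

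The hard part will be this last step: producing and verifying the geometric surgery exact sequence for the mixed structure set on the $n$-ad $\mathcal X$ rel $\partial_1X$, equivalently checking that the concordance classes of the Definition are in natural bijection with the homotopy group of the algebraically defined spectrum. This is where the $n$-ad bookkeeping and relative transversality are genuinely used, and it is the only point at which anything beyond formal manipulation of (\ref{5.21}) in the stable category enters; everything else is an exact parallel of the $(X,\partial X)$ analysis in Section 4.
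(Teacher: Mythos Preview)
Your proposal is correct and follows essentially the same approach as the paper: the paper's proof simply says to use diagram (\ref{5.21}) in the same way that diagram (\ref{4.5}) was used to construct $\mathbb T\mathbb S(X,\partial X)$ and its three cofibrations, and then invokes the standard argument of \cite{CavMurSpa06} and \cite{Wall} for the geometric identification. You have spelled out precisely what that ``standard argument'' amounts to --- the $3\times3$/octahedral manipulation for the cofibrations and the five-lemma comparison with the geometric exact sequence --- which is exactly right and is in fact more detail than the paper gives.
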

\begin{proof} We can use the diagram (\ref{5.21}) similarly to the diagram (\ref{4.5})
to define a spectrum $\mathbb T\mathbb S(\mathcal X;
\partial_1X)$ fitting into
the cofibrations (\ref{5.22}).
 From this the result
follows using standard line of  argument  (cf. \cite{CavMurSpa06} and
\cite{Wall}).
 \end{proof}
 
 \begin{remark} Similarly to the construction of the diagrams
 (\ref{4.11})--(\ref{4.13}), it is now easy  to write down braids of exact
 sequences which connect the groups $\mathcal T \mathcal S_i(\mathcal X;
 \partial_1X)$ with others groups from the diagram (\ref{5.19}).
 \end{remark}

\begin{example}\label{Example 3} Several structures from (\ref{5.2}) -- (\ref{5.6}) coincide
with the structures  on the corresponding manifold triads. For example,
for a pair $Q$ in (\ref{1.1}) consider the manifold triad
\begin{equation}\label{5.23}
\mathcal Z=(X\setminus Y; \partial_0(X\setminus Y),\partial_1
(X\setminus Y))= (X\setminus Y;
\partial X\setminus \partial Y, \partial U)
\end{equation}
 where $\partial U$ is a tubular
neighborhood of $Y$ in $X$, $X\setminus Y= \overline{X\setminus U}$,
and similarly the definition we use  for $\partial X\setminus
\partial Y$. Now,  the second structure in (\ref{5.2}) and the second
structure in (\ref{5.3}) provide the structure sets which coincide with
$\mathcal S^{\partial_1(X\setminus Y)}(X\setminus Y;
\partial X\setminus \partial Y, \partial U)$ and $\mathcal T^{\partial_1(X\setminus Y)}(X\setminus Y;
\partial X\setminus \partial Y, \partial U,)$ of the triple (\ref{5.23}), respectively.

For $Q$ in (\ref{1.1}),  consider the  manifold triad
\begin{equation}\label{5.24}
\mathcal X=(X; \partial_0X,\partial_1 X)=(X;
\partial X\setminus \partial Y, U_{\partial Y})
\end{equation}
where $U_{\partial Y}$ is a tubular neighborhood of $\partial Y$ in
$\partial X$. The second structure in (\ref{5.6}) is $\mathcal S^{\partial_1
X}(\mathcal X)$-structure on $\mathcal X$ in (\ref{5.24}), and the third structure
in (\ref{5.6}) is the structure $\mathcal S(\mathcal X)$ on $\mathcal X$ in (\ref{5.24})
(cf.  \cite{Ranicki79}, \cite{Ranicki81}, \cite{Wall}). The fourth structure in (\ref{5.3}) is $\mathcal
T^{\partial_1X}(\mathcal X)$-structure on $\mathcal X$ in (\ref{5.24}).
\end{example}

\begin{example}\label{Example 4} In the conditions of Example \ref{Example 2} for a triple in
(\ref{5.17}) we can write down the following commutative diagram (cf.
\cite{Ranicki81} and \cite{Wall})
\begin{equation}\label{5.24'}
\xymatrix{\xyC{0.3pc}\xyV{0.9pc}
&\ar[d]&     &\ar[d] &    & \ar[d] &       \\
\ar[r] &\mathcal S^{\partial  }_{n+1}( X, \partial X)
&\ar[r]&
H_n(X; \mathbf L_{\bullet})& \ar[r] & L_n(\pi_1(X))\ar[r]& \\
&\ar[d]&   &\ar[d] &    & \ar[d]&    \\
\ar[r]&
\mathcal S_{n+1}^{\partial_1 X}(\mathcal X) &\ar[r] &H_n(X,
\partial_0X; \mathbf L_{\bullet})&
\ar[r] &L_n(\pi_1(\partial_0X)\to \pi_1(X))\ar[r]& \\
&\ar[d]&   &\ar[d] &    & \ar[d]&   \\
\ar[r]& \mathcal S_{n}^{\partial}(\partial_0X, \partial_{\emptyset}X)\ar[d]
&\ar[r]&
H_{n-1}(\partial_0X; \mathbf L_{\bullet})\ar[d]& \ar[r] &
L_{n-1}(\pi_1(\partial_0 X))\ar[d]\ar[r]&\\
&&&&&}
\end{equation}
where $H_n(X; \mathbf L_{\bullet})= \mathcal T^{\partial  }_{n}( X,
\partial X)$,
$ H_n(X,
\partial_0X; \mathbf L_{\bullet})=\mathcal T^{\partial_1X}_n (\mathcal X)$, and $H_{n-1}(\partial_0X; \mathbf L_{\bullet})=
\mathcal T_{n-1}^{\partial}(\partial_0 X,\partial_{\emptyset}X)$.

The diagram (\ref{5.24'}) describes connections between various structure
sets and obstruction groups for the case of a manifold triple $Q$ in
(\ref{5.17}) rel $\partial_1X$. Similarly to the consideration above we
can introduce the  mixed structure set $\mathcal T \mathcal S^{\partial_1X}(\mathcal X,
\partial_0X)$ on $Q$ rel $\partial_1X$. This structure
set consists of the classes of concordance of $t$-triangulations of
$\mathcal X$ rel $\partial_1X$ whose restrictions to
$\partial_0X$ provide classes of concordance of  $s$-triangulations
rel $\partial_{\emptyset}X$. Since the diagram (\ref{5.24'}) is realized
on the spectrum level the introduced structure set  is realized on
the spectrum level by the spectrum $\mathbb T \mathbb S^{\partial_1X}(\mathcal
X,\partial_0X)$ with
\[
\pi_i(\mathbb T \mathbb S^{\partial_1X}(\mathcal X,\partial_0X))=\mathcal T \mathcal
S^{\partial_1X}_i(\mathcal X,
\partial_0X), \ \
\mathcal T \mathcal S^{\partial_1X}_n(\mathcal X,
\partial_0X)\cong \mathcal T \mathcal
S^{\partial_1X}(\mathcal X,\partial_0X).
\]
\end{example} 

Since the diagram (\ref{4.24}) is realized on the spectrum level, similarly to
Theorem 3,  we obtain the following result.

\begin{theorem} The spectrum $\mathbb T \mathbb S^{\partial_1X}(\mathcal
X,\partial_0X)$ fits  into the following cofibrations
\begin{eqnarray}\label{5.25}
\Omega \mathbb T \mathbb S^{\partial_1X}(\mathcal
X,\partial_0X)\longrightarrow &\mathbb L(\pi_1(X))&\longrightarrow \mathbb S^{\partial_1X}(\mathcal X)\nonumber\\
\Omega \mathbb T \mathbb S^{\partial_1X}(\mathcal
X,\partial_0X) \longrightarrow &(X/\partial_0 X)_+\land\mathbf L_{\bullet}&\longrightarrow \Sigma \mathbb L(\pi_1(\partial_{0} X))\\
\Omega \mathbb T \mathbb S^{\partial_1X}(\mathcal
X,\partial_0X) \longrightarrow &\mathbb S^{\partial}(\partial_0 X, \partial_{\emptyset}X)&\longrightarrow \Sigma \left(X_+\land\mathbf L_{\bullet}\right).\nonumber
\end{eqnarray}
\end{theorem}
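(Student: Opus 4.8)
The plan is to imitate the proof of Theorem 3 (and, before that, the construction of the braids~\eqref{4.22}--\eqref{4.24} from the diagram~\eqref{4.16}), using the $rel_{\partial_1X}$-version of the basic square~\eqref{5.19}, namely the homotopy commutative diagram of cofibrations~\eqref{5.24'} on the spectrum level. The point is that~\eqref{5.24'} already exhibits three columns --- $\mathbb S^{\partial}(X,\partial X)$-type, $\mathbb S^{\partial_1X}(\mathcal X)$-type, and $\mathbb S^{\partial}(\partial_0X,\partial_{\emptyset}X)$-type spectra mapping to the appropriate $\mathbf L_{\bullet}$-homology spectra and then to $L$-spectra --- so the construction of $\mathbb T\mathbb S^{\partial_1X}(\mathcal X,\partial_0X)$ is formally identical to the construction of $\mathbb T\mathbb S(X,\partial X)$ from~\eqref{4.5} and of $\mathbb T\mathbb S(\mathcal X;\partial_1X)$ from~\eqref{5.21}.

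Concretely, first I would recall that, by the remark preceding Theorem~5, the mixed structure spectrum $\mathbb T\mathbb S^{\partial_1X}(\mathcal X,\partial_0X)$ was already \emph{defined} as a suitable homotopy (co)fiber in the diagram~\eqref{5.24'}: one takes the map $\mathbb L(\pi_1(X))\to \mathbb S^{\partial_1X}(\mathcal X)$ obtained by composing a horizontal connecting map with a vertical map in~\eqref{5.24'}, and lets $\mathbb T\mathbb S^{\partial_1X}(\mathcal X,\partial_0X)$ be its homotopy cofiber (up to the usual shift by $\Omega$). This gives immediately the first cofibration in~\eqref{5.25}. Then I would run the standard ``$3\times 3$'' (octahedral / braid) argument on the diagram~\eqref{5.24'}: the three spectra in a $3\times 3$ diagram of cofibrations whose rows and columns are cofibration sequences force a fourth cofibration relating the iterated cofibers, and reading off the relevant cofiber sequences yields the second and third lines of~\eqref{5.25}. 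In the second cofibration the middle term is $(X/\partial_0X)_+\wedge\mathbf L_{\bullet}$, which is exactly $\mathcal T^{\partial_1X}$-homology appearing in the middle column of~\eqref{5.24'}, and the right term $\Sigma\mathbb L(\pi_1(\partial_0X))$ comes from the bottom row; in the third cofibration the middle term is $\mathbb S^{\partial}(\partial_0X,\partial_{\emptyset}X)$, the bottom-left corner of~\eqref{5.24'}, and the right term $\Sigma(X_+\wedge\mathbf L_{\bullet})$ arises as the suspension of the $\mathbf L_{\bullet}$-homology of $X$ in the top row. All three cofibrations are compatible because they are all derived from the single homotopy commutative diagram~\eqref{5.24'}, exactly as the three cofibrations~\eqref{4.10} were all derived from~\eqref{4.5}, and the three~\eqref{4.21} from~\eqref{4.16}.

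The verification that the homotopy groups of $\mathbb T\mathbb S^{\partial_1X}(\mathcal X,\partial_0X)$ in the appropriate degree recover the geometrically defined set $\mathcal T\mathcal S^{\partial_1X}(\mathcal X,\partial_0X)$ is again the ``standard argument'': one compares the long exact sequence of homotopy groups of the first cofibration in~\eqref{5.25} with the geometric surgery exact sequence for mixed $t$-triangulations of $\mathcal X$ rel $\partial_1X$ --- whose restriction to $\partial_0X$ is an $s$-triangulation rel $\partial_{\emptyset}X$ --- and invokes the five lemma together with the identifications of the outer terms already recorded after~\eqref{5.24'} (namely $H_n(X;\mathbf L_{\bullet})=\mathcal T^{\partial}_n(X,\partial X)$, $H_n(X,\partial_0X;\mathbf L_{\bullet})=\mathcal T^{\partial_1X}_n(\mathcal X)$, etc.) and the degree normalization $\mathcal T\mathcal S^{\partial_1X}_n(\mathcal X,\partial_0X)\cong\mathcal T\mathcal S^{\partial_1X}(\mathcal X,\partial_0X)$ stated in Example~\ref{Example 4}. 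I expect the only genuinely delicate point to be bookkeeping of the suspension/loop shifts and the orientation data carried by the square $F$ in~\eqref{5.18}, so that the degrees in the three cofibrations of~\eqref{5.25} match those written in the statement; the homotopy-theoretic content is entirely formal, being the same octahedral manipulation used for Theorems~1 and~3, so I would not belabor it beyond citing \cite{CavMurSpa06} and \cite{Wall} for the standard line of argument.
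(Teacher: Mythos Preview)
Your proposal is correct and follows precisely the approach the paper takes: the paper's entire proof is the one-line remark that since diagram~(\ref{5.24'}) is realized on the spectrum level, the result follows ``similarly to Theorem~3'' (the paper's cross-reference to~(\ref{4.24}) appears to be a typo for~(\ref{5.24'})). Your expansion---defining $\mathbb T\mathbb S^{\partial_1X}(\mathcal X,\partial_0X)$ as the cofiber of the composite $\mathbb L(\pi_1(X))\to\mathbb S^{\partial_1X}(\mathcal X)$ in~(\ref{5.24'}) and then reading off the other two cofibrations by the standard $3\times 3$/octahedral argument, exactly as~(\ref{4.10}) was extracted from~(\ref{4.5})---is exactly what ``similarly to Theorem~3'' means.
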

\hfill $\Box$

\begin{remark} Similarly to the construction of the diagrams (\ref{4.11})--(\ref{4.13}), it
is now easy to write down braids of exact sequences which connect
the groups $\mathcal T \mathcal S^{\partial_1X}_i(\mathcal X,
\partial_0X)$ with other groups from the diagram (\ref{5.24'}).
\end{remark}
\smallskip

\begin{example}\label{Example 5} The first structure in (\ref{5.5}) is presented by the
structure set  $\mathcal T \mathcal S^{\partial_1 Z}(\mathcal Z,
\partial_0Z)$ from (\ref{5.23}) and the last structure in (\ref{5.5}) is presented by
the structure set $\mathcal T \mathcal S^{\partial_1X}(\mathcal X,
\partial_0X)$ from (\ref{5.24}).
\end{example}
 
\begin{example}\label{Example 6} The  structure $\left[\begin{array}{cc} \mathcal S \mathcal S\\
\mathcal S\mathcal S\\
\end{array}\right]$  in (\ref{5.2}) is given by the classes
of concordance of $s$-triangulations of $(X, \partial X)$ which are
split along $Y$ and split along $\partial Y$ on the boundary
(cf. \cite{CenMurRep} and \cite{Ranicki81}).  This structure corresponds to the
structure set
\[
\mathcal S(X,Y; \partial)=\mathcal S(X,Y;\partial X, \partial Y)=\mathcal S(X,
Y, \xi;  \partial X,
\partial Y, \partial \xi)
\]
described
 in  \cite{CenMurRep}.
\end{example}

\begin{example}\label{Example 7} The third structure in (\ref{5.4}) is given by the
classes of concordance of $t$-triangulations of $(X,
\partial X)$ whose restrictions to the boundary provides  classes of concordance
of $s$-triangulations $\partial X$ which are split along
$\partial Y$. Denote the corresponding structure set by $ \mathcal T\mathcal
S(X; \partial X, \partial Y) $.
 This structure set is realized by the spectrum $
\mathbb T\mathbb  S(X; \partial X, \partial Y) $ with
\[
\pi_i(\mathbb T\mathbb  S(X; \partial X, \partial Y)) = \mathcal T\mathcal S_i(X;
\partial X, \partial Y),\ \ \mathcal T\mathcal S_n(X; \partial X, \partial
Y)=\mathcal T\mathcal S(X; \partial X, \partial Y)
\]
(cf. \cite{CenMurRep} and \cite{Ranicki81}).
\end{example} 

Consider the commutative diagram of structures on $Q$ in (\ref{1.1})
\begin{equation}\label{5.26}
\begin{array}{ccc}
\left[\begin{array}{cc} \mathcal S \mathcal S\\
\mathcal D\mathcal D\\
\end{array}\right]& \longrightarrow&
 \left[\begin{array}{cc} \mathcal T \mathcal T\\
\mathcal D\mathcal D\\
\end{array}\right]\\
\downarrow\quad& &\quad\downarrow\\
\left[\begin{array}{cc} \mathcal S \mathcal S\\
\mathcal S\mathcal S\\
\end{array}\right] &\longrightarrow&
 \left[\begin{array}{cc} \mathcal T \mathcal T\\
\mathcal T\mathcal T\\
\end{array}\right]\\
\downarrow\quad& &\quad\downarrow\\
\left[\begin{array}{cc} \mathcal S \mathcal S\\
\end{array}\right] &\longrightarrow&
 \left[\begin{array}{cc} \mathcal T \mathcal T\\
\end{array}\right]
\end{array}
\end{equation}
where all the maps in the upper square and the bottom horizontal map are the
maps of `weakening of the structure', and the vertical maps in the bottom
square are given by the restriction of the structure to the submanifold.
The diagram (\ref{5.26}) induces the following commutative diagram of
structure sets (cf. \cite{CenMurRep}, \cite{Ranicki79}, \cite{Ranicki81}, and \cite{Wall})
\begin{equation}\label{5.27}
\xymatrix{
 & \vdots \ar[d] & \vdots \ar[d] & \vdots \ar[d]   & &\\
\dots \ar[r] & \mathcal S_{n+1}^{\partial U}(X\setminus Y; \partial
X\setminus \partial Y, \partial U) \ar@{}[dr]|-{\ast} \ar[r] \ar[d] & H_n(X\setminus Y, \partial X\setminus \partial Y;\mathbf L_{\bullet})  \ar[r] \ar[d] & L_n^{rel\ \partial  U} \ar[r] \ar[d] & \dots \\
\dots \ar[r] & \mathcal  S_{n+1}(X,Y; \partial X, \partial Y) \ar@{}[dr]|-{\ast\ast} \ar[r] \ar[d] &  H_n(X,
\partial X;\mathbf L_{\bullet}) \ar[r] \ar[d] & LP_{n-q}(\triangle) \ar[r] \ar[d]& \dots \\
\dots \ar[r] &  \mathcal  S_{n-q+1}(Y, \partial Y) \ar[r] \ar[d] & H_{n-q}(Y, \partial
Y;\mathbf L_{\bullet}) \ar[d] \ar[r] & L_{n-q}^{rel} \ar[d] \ar[r] &\dots \\
  & \vdots  & \vdots  &\vdots  & }
\end{equation}
where the first row is the surgery exact sequence of the triple in
(\ref{5.23}) for the case rel $\partial U$ with
$
L_*^{rel\
\partial U}=L_*(\pi_1(\partial X\setminus
\partial Y)\to \pi_1(X\setminus Y))
$ and $ L_*^{rel}= L_{*}(\pi_1(\partial Y)\to \pi_1(Y))$. The second
row of (\ref{5.27}) follows from the diagram 3.34 of \cite{CenMurRep}.
 The bottom row in (\ref{5.27}) is the surgery exact
sequence of the manifold pair $(Y, \partial Y)$.

The diagram (\ref{5.27}) is
realized on the spectrum level and we have the following
isomorphisms
\begin{eqnarray*}
 \mathcal S^{\partial U}(X\setminus Y; \partial X\setminus \partial
Y,\partial U)&\cong& \mathcal S_{n+1}^{\partial U}(X\setminus Y; \partial
X\setminus
\partial Y, \partial U),
\\
 \mathcal S(X,Y;\partial X, \partial Y)&\cong& \mathcal  S_{n+1}(X,Y;
\partial X, \partial Y),\\
 \mathcal T^{\partial U}(X\setminus Y; \partial X\setminus \partial Y,
\partial U)&\cong&
 H_n(X\setminus Y, \partial X\setminus \partial Y;\mathbf
 L_{\bullet}).
 \end{eqnarray*}

\begin{example}\label{Example 8} Using the same line of argument  as above  we can
define a structure set  $\mathcal N\mathcal S^{rel}(X, Y;  \partial X,
\partial Y)$ which consists of the classes of $\mathcal N\mathcal S$-structures
which give  $\mathcal N\mathcal S$-structures on the boundary $(\partial X,
\partial Y)$.
 This case corresponds to
 the $\left[\begin{array}{cc} \mathcal T\mathcal T\\
\mathcal S\mathcal S\\
\end{array}\right]$-structure from the list in (\ref{5.4}) on the manifold $Q$ in (\ref{1.1}).
\end{example}

\begin{theorem} There exists a spectrum $\mathbb N\mathbb S^{rel}(X,
Y;
\partial X, \partial Y)$ with homotopy groups
\[
\pi_i(\mathbb N\mathbb S^{rel}(X, Y; \partial X, \partial Y))= \mathcal N \mathcal S^{rel}_i(X,Y; \partial X, \partial Y), \ \
\mathcal N \mathcal S^{rel}(X,Y; \partial X, \partial Y)=\mathcal N \mathcal
S^{rel}_n(X,Y; \partial X, \partial Y).
\]
This spectrum fits into the cofibrations
\begin{eqnarray*}
 \mathbb N\mathbb S^{rel}(X, Y; \partial X, \partial Y)\longrightarrow &\mathbb
L(\pi_1(\partial X\setminus \partial Y)\to \pi_1(X\setminus Y))&\longrightarrow
\mathbb S^{}(X, Y;  \partial),\\
 \mathbb N\mathbb S^{rel}(X, Y; \partial X, \partial Y)\longrightarrow &(X/\partial
X)_+\land \mathbf L_{\bullet}&\longrightarrow \Sigma^q \mathbb L(\pi_1(\partial
Y)\to\pi_1(Y)),\\
\mathbb N\mathbb S^{rel}(X, Y; \partial X, \partial Y)\longrightarrow&\Sigma^{q-1}\mathbb
S(Y, \partial Y)&\longrightarrow \Sigma^1 \left(\left[(X\setminus Y)/(\partial
X\setminus Y)\right]_+\land \mathbf L_{\bullet}\right).
\end{eqnarray*}
\end{theorem}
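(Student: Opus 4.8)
The plan is to imitate, in the relative-boundary setting, the construction of $\mathbb N\mathbb S(X,Y)$ from the diagram~(\ref{4.16}) and of $\mathbb N\mathbb S^{\partial}(X,Y)$ from~(\ref{5.10}). The input is the homotopy commutative diagram of spectra, all of whose rows and columns are cofibrations, that realizes~(\ref{5.27}) on the spectrum level (its existence is asserted just before the statement and is the evident $rel_{\partial}$ analogue of~(\ref{4.16}) and~(\ref{5.10})): its first column is
\[
\mathbb S^{\partial U}(X\setminus Y;\partial X\setminus\partial Y,\partial U)\longrightarrow\left[(X\setminus Y)/(\partial X\setminus\partial Y)\right]_+\land\mathbf L_{\bullet}\longrightarrow\mathbb L(\pi_1(\partial X\setminus\partial Y)\to\pi_1(X\setminus Y)),
\]
its middle row passes through $\mathbb S(X,Y;\partial)$ and $(X/\partial X)_+\land\mathbf L_{\bullet}$, and its bottom row through $\Sigma^{q-1}\mathbb S(Y,\partial Y)$ and the $L$-spectrum $\mathbb L(\pi_1(\partial Y)\to\pi_1(Y))$ of the pair $(Y,\partial Y)$. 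Given this diagram, I would \emph{define} $\mathbb N\mathbb S^{rel}(X,Y;\partial X,\partial Y)$ to be the homotopy fiber of the composite
\[
\mathbb L(\pi_1(\partial X\setminus\partial Y)\to\pi_1(X\setminus Y))\longrightarrow\mathbb S^{\partial U}(X\setminus Y;\partial X\setminus\partial Y,\partial U)\longrightarrow\mathbb S(X,Y;\partial)
\]
read off from the extended form of that diagram (the first arrow being the connecting map of the assembly cofibration, the second a map of the first column) --- word for word the way $\mathbb N\mathbb S(X,Y)$ is produced in~(\ref{4.21}) and $\mathbb N\mathbb S^{\partial}(X,Y)$ in~(\ref{5.14}).

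The three cofibrations are then produced by a single octahedral argument, exactly as~(\ref{4.21}) is extracted from~(\ref{4.16}). The first is the defining cofibration of the homotopy fiber. For the other two, one forms the octahedron associated with the above composite together with the two vertical cofibre sequences of the diagram meeting $\mathbb L(\pi_1(\partial X\setminus\partial Y)\to\pi_1(X\setminus Y))$ and $\mathbb S(X,Y;\partial)$, and rotates it: the cofibres that appear are $(X/\partial X)_+\land\mathbf L_{\bullet}$ and $\Sigma^{q-1}\mathbb S(Y,\partial Y)$, with the maps to $\Sigma^{q}\mathbb L(\pi_1(\partial Y)\to\pi_1(Y))$ and to $\Sigma^{1}\!\left(\left[(X\setminus Y)/(\partial X\setminus Y)\right]_+\land\mathbf L_{\bullet}\right)$ supplied by the rows and columns of the diagram. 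The suspension shifts $\Sigma^{q}$, $\Sigma^{q-1}$, $\Sigma^{1}$ enter for the same dimension-counting reason as in~(\ref{4.21}): passing between the pair $(X,Y)$ and its complement $X\setminus Y$ is mediated by the $(q-1)$-sphere bundle of $\xi$.

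Finally I would set $\pi_i(\mathbb N\mathbb S^{rel}(X,Y;\partial X,\partial Y))=\mathcal N\mathcal S^{rel}_i(X,Y;\partial X,\partial Y)$ and use the homotopy long exact sequences of these cofibrations to match the groups degree by degree with the geometrically defined structure set of Example~\ref{Example 8}; the normalization $\mathcal N\mathcal S^{rel}(X,Y;\partial X,\partial Y)=\mathcal N\mathcal S^{rel}_n(X,Y;\partial X,\partial Y)$ is pinned down exactly as for $\mathbb N\mathbb S(X,Y)$ and $\mathbb N\mathbb S^{\partial}(X,Y)$ (whose structure sets sit in degree $n$), using that restricted to the ambient manifold the structure is of $\mathcal T(X,\partial X)\cong H_n(X,\partial X;\mathbf L_{\bullet})$ type, in the conventions of~(\ref{4.7}). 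As in the proofs of the earlier spectra, once the diagram is in hand this is routine and can be cited as a standard argument (cf.~\cite{BakMur03}, \cite{BakMur06}, \cite{CenMurRep}, \cite{Ranicki81}, \cite{Wall}).

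The main obstacle is the one substantive point: checking that the spectrum-level diagram realizing~(\ref{5.27}) exists \emph{with coherent homotopies}, i.e. that the assembly-type maps for the triad $(X\setminus Y;\partial X\setminus\partial Y,\partial U)$, for the manifold pair with boundary $(X,Y;\partial X,\partial Y)$, and for $(Y,\partial Y)$ fit into squares commuting up to homotopy compatibly enough that the composite $\mathbb L(\pi_1(\partial X\setminus\partial Y)\to\pi_1(X\setminus Y))\to\mathbb S(X,Y;\partial)$ is well defined and the octahedral axiom applies. This is precisely the functoriality of the basic surgery maps invoked throughout the paper, treated here as known; so in practice the proof reduces to assembling that diagram from the cited sources and then running the formal octahedral bookkeeping.
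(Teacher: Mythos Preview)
Your proposal is correct and is exactly what the paper intends: its proof consists of the single sentence ``The standard line of argument as before provides this result'' with references, and you have accurately unpacked that standard argument by running the construction from~(\ref{4.16}) and~(\ref{5.10}) on the spectrum-level realization of~(\ref{5.27}). One minor slip: what you call the ``first column'' of the diagram is in fact its top \emph{row} in the paper's orientation; the content is unaffected.
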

\begin{proof} The standard line of argument  as before provides this
result (cf. also \cite{BakMur06}, \cite{CenMurRep}, \cite{Ranicki81},  and \cite{Wall}). 
\end{proof}


\begin{remark} Similarly to the constructions above  it is now easy  to
write down braids of exact sequences which connect the groups $\mathcal
N \mathcal S^{rel}_i(X,Y;
\partial X, \partial Y)$ with other groups from the diagram (\ref{5.27}).
\end{remark}

\begin{remark} Note that using the standard argument  of this section
it is possible to define other structure sets for the structures
\begin{equation}\label{5.31}
\left[\begin{array}{cc} \mathcal S \mathcal S\\
\mathcal S\mathcal D\\
\end{array}\right], \ \ \left[\begin{array}{cc} \mathcal T \mathcal S\\
\mathcal S\mathcal S\\
\end{array}\right], \
\ \left[\begin{array}{cc} \mathcal T\mathcal S\\
\mathcal S\mathcal D\\
\end{array}\right], \ \ \left[\begin{array}{cc} \mathcal T\mathcal T\\
\mathcal T\mathcal S\\
\end{array}\right], \ \ \left[\begin{array}{cc} \mathcal T\mathcal T\\
\mathcal S\mathcal D\\
\end{array}\right]
\end{equation}
from (\ref{5.2})--(\ref{5.5}) which are  not defined above.
\end{remark}

\section{On the surgery obstruction groups}

The constructions of spectra for various structure sets provides
also the natural maps of spectra which correspond to the maps of
`weakening of the structure' for structure sets. The cofibres of such
maps are the surgery spectra for the corresponding obstruction groups.
Below we consider several examples.

\begin{example}\label{Example 9} For a closed manifold $X$ we have only one map
(\ref{2.2}) of `weakening of the structure'. This map induces the cofibration
\begin{equation}\label{6.1}
\Omega \mathbb S(X) \to X_+\land \mathbf L_{\bullet}\rightarrow \mathbb
L(\pi_1(X))
\end{equation}
which is equivalent to the cofibration (\ref{4.1}).
 \end{example}

\begin{example}\label{Example 10} For a manifold with boundary $(X, \partial X)$ we
have the diagram (\ref{2.8}) for structures.   Thus we obtain the following
 maps of `weakening of the structure' and corresponding
cofibrations:
\begin{equation}\label{6.2}
\begin{array}{c}
 \mathcal S\mathcal D\to \mathcal T\mathcal D,\\
 \Omega \mathbb S^{\partial}(X, \partial X) \to X_+\land \mathbf L_{\bullet}\rightarrow \mathbb
L(\pi_1(X))
\end{array}
\end{equation}
which coincide with the upper row in (\ref{4.5});
\begin{equation}\label{6.3}
\begin{array}{c}
 \mathcal S\mathcal S\to \mathcal T\mathcal T,\\
 \Omega \mathbb S(X, \partial X)\to  (X/\partial X)_+\land \mathbf L_{\bullet}
 \to
\mathbb L^{rel} \end{array}
\end{equation}
which coincide with the middle  row in (\ref{4.5});
\begin{equation}\label{6.4}
\begin{array}{c}
 \mathcal T\mathcal S\to \mathcal T\mathcal T,\\
 \Omega \mathbb T\mathbb S(X, \partial X) \to (X/\partial X)_+\land\mathbf L_{\bullet}\to \Sigma \mathbb L(\pi_1(\partial X))
 \end{array}
\end{equation}
which coincide with the middle  row in (\ref{4.10});
\begin{equation}\label{6.5}
\begin{array}{c}
 \mathcal T\mathcal D\to \mathcal T\mathcal T,\\
 X_+\land \mathbf L_{\bullet}\to  (X/\partial X)_+\land \mathbf L_{\bullet}
 \to
\Sigma\left((\partial X_+)\land \mathbf L_{\bullet}\right)
\end{array}
\end{equation}
which coincide with the middle  column  in (\ref{4.5});
\begin{equation}\label{6.6}
\begin{array}{c}
 \mathcal S\mathcal D\to \mathcal S\mathcal S,\\
 \mathbb S^{\partial}(X, \partial X)\to  \mathbb S(X, \partial X)\to \Sigma \mathbb S(\partial X)
\end{array}
\end{equation}
which coincide with the left column  in (\ref{4.5});
\begin{equation}\label{6.7}
\begin{array}{c}
 \mathcal T\mathcal D\to \mathcal T\mathcal S,\\
 X_+\land \mathbf L_{\bullet}\to \Omega \mathbb T\mathbb S(X, \partial X) \to \mathbb S(\partial X)
 \end{array}
\end{equation}
which is equivalent to the bottom row in (\ref{4.10});
\begin{equation}\label{6.8}
\begin{array}{c}
 \mathcal S\mathcal S\to \mathcal T\mathcal S,\\
\mathbb S(X, \partial X)\to  \mathbb T\mathbb S(X, \partial X)  \to \Sigma
\mathbb L(\pi_1(X)) \end{array}
\end{equation}
which is equivalent to the upper row in (\ref{4.10}). For the maps
\begin{equation}\label{6.9}
\begin{array}{c}
 \mathcal S\mathcal D\to \mathcal T\mathcal S,\\
 \mathcal S\mathcal D\to \mathcal T\mathcal T\\
\end{array}
\end{equation}
which follows from (\ref{2.8}) the obstruction groups 
have not been introduced so far. Now it is an easy exercise to introduce the surgery
spectra for  the corresponding obstruction groups and commutative
diagrams and braids of exact sequences, similarly to above.
\end{example}

For the case of a manifold pair $(X,Y)$ we can write down the maps
of `weakening of the structure' and corresponding cofibrations
similarly to Example \ref{Example 10}. The diagram (\ref{3.1}) on the spectrum level
provides the maps of spectra for structure sets, and as cofibres of
the maps we shall obtain spectra for corresponding obstruction
groups.  In this case we can also write additional cofibrations  of
spectra for structure sets (with cofibres which are spectra for
corresponding obstruction groups)  which provides commutative
diagram of structures (\ref{3.3}).

For a manifold pair with boundary we have a great number of  maps
of `weakening of the structure', but now it is sufficiently easy to
construct spectra for corresponding obstruction groups and describe
theirs properties. Many of such examples are given in Section 5.

\begin{example}\label{Example 11} For a manifold pair with boundary (\ref{1.1}) there is
the following map of `weakening of the structure'
\begin{equation}\label{6.10}
\left[\begin{array}{cc} \mathcal S \mathcal S\\
\mathcal S\mathcal S\\
\end{array}\right] \longrightarrow \left[\begin{array}{cc} \mathcal T \mathcal S\\
\mathcal T\mathcal T\\
\end{array}\right]
\end{equation}
which provides the following cofibration (cf. Theorem 2 of \cite{CenMurRep})
\begin{equation}\label{6.11}
 \mathbb S(X, Y, \partial )\to \mathbb T\mathbb S(X,\partial X)\longrightarrow
\Sigma^{q+1}\mathbb L\mathbb P\mathbb S(\Delta)
\end{equation}
where the groups $ LPS_i(\Delta)=\pi_i(\mathbb L\mathbb P\mathbb S(\Delta))$
are the corresponding obstruction groups.
\end{example}

\section*{Acknowledgements}
This research was supported by the
 Slovenian Research Agency grants P1-0292-0101 and J1-9643-0101.
\begin{bibdiv}
\begin{biblist}

\bib{BakMur03}{article}{
title={Splitting along submanifolds and L--spectra},
author={Bak, A.},
author={Muranov, Yu.V.},
journal={Contemporary Mathematics and Applications: Topology, Calculus,
and Related Questions, Academy of Sciences of Georgia, Inst. of Cybernetics, Tbilisi},
volume={1},
date={2003},
pages={3--18}
}

\bib{BakMur06}{article}{
title={Normal invariants of  pairs of manifolds and signature mappings},
author={Bak, A.},
author={Muranov, Yu.V.},
journal={Sb. Math.},
volume={197},
date={2006},
pages={791--811},
translation={Sbornik Math.},
volume={197},
date={2006},
pages={791--811}
}

\bib{CavMurSpa05}{article}{
title={Relative groups  in surgery theory},
author={Cavicchioli, A.},
author={Muranov, Yu.V.},
author={Spaggiari, F.},
journal={Bull. Belg. Math. Soc. Simon Stevin},
volume={12},
date={2005},
pages={109--135}
}

\bib{CavMurSpa06}{article}{
title={Mixed structures on a manifold with boundary},
author={Cavicchioli, A.},
author={Muranov, Yu.V.},
author={Spaggiari, F.},
journal={Glasgow Math. J.},
volume={48},
date={2006},
pages={125--143}
}

\bib{CenMurRep}{article}{
title={On the splitting problem for manifold pairs with boundaries},
author={Cencelj, M.},
author={Muranov, Yu.V.},
author={Repov\v s, D.},
journal={Abh. Math. Sem. Univ. Hamburg},
volume={76},
date={2006},
pages={35--55}
}

\bib{HamRanTay}{article}{
  title={Round $L$-theory},
  author={Hambleton, I.},
  author={Ranicki, A. A.},
  author={Taylor, L.},
  journal={J. Pure Appl. Algebra},
  volume={47},
  date={1987},
 pages={131--154}
 }

\bib{Muranov}{article}{
title={Obstruction groups to splitting, and
quadratic extensions of antistructures},
author={Muranov, Yu.V.},
journal={Izvestiya RAN: Ser. Mat. },
volume={59 (6)},
date={1995},
pages={107--132},
translation={Izvestiya Math.},
volume={\bf  59 (6)},
date={1995},
pages={1207--1232}
}

\bib{Ranicki79}{article}{
title={The Total Surgery Obstruction},
author={Ranicki, A. A.},
journal={Lecture Notes in Math.},
volume={763},
date={1979},
pages={275--316}
}

\bib{Ranicki81}{book}{
title={Exact Sequences in the Algebraic Theory of Surgery},
author={Ranicki, A. A.},
date={1981},
publisher={Math. Notes  {\bf 26}, Princeton Univ. Press},
address={Princeton, N. J.}
}

\bib{Wall}{book}{
title={Surgery on Compact Manifolds},
author={Wall , C. T. C. },
date={1999},
publisher={Second Edition, Edited by A. A. Ranicki, Amer. Math. Soc.},
address={Providence, R.I.}
}

\bib{Weinberger}{book}{
title={The Topological Classification of Stratified Spaces},
author={Weinberger, S.},
publisher={The University of Chicago Press},
address={Chicago and London},
date={1994} 
}


\end{biblist}
\end{bibdiv}
\bigskip
\end{document}